\numberwithin{equation}{section}
\newcommand{\R}{\mathbb{R}}
\newcommand{\bb}{{\mbox{\boldmath$b$}}}
\newcommand{\mm}{{\mbox{\boldmath$m$}}}
\newcommand{\sbb}{{\mbox{\scriptsize\boldmath$b$}}}
\newcommand{\ppi}{{\mbox{\boldmath$\pi$}}}
\newcommand{\sfd}{{\sf d}}
\newcommand{\Kliminf}{K\kern-3pt-\kern-2pt\mathop{\rm lim\,inf}\limits}  
\renewcommand{\div}{\mathop{\rm div}\nolimits}          
\renewcommand{\d}{{\mathrm d}}
\newcommand{\restr}[1]{\lower3pt\hbox{$|_{#1}$}}
\newcommand{\up}{\uparrow}
\newcommand{\nchi}{{\raise.3ex\hbox{$\chi$}}}
\newcommand{\weakto}{\rightharpoonup}
\newenvironment{proof}{\removelastskip\par\medskip   
\noindent{\em Proof.}
\rm}{\penalty-20\null\hfill$\square$\par\medbreak}
\newtheorem{theorem}{Theorem}[section]
\newtheorem{lemma}[theorem]{Lemma}
\newtheorem{proposition}[theorem]{Proposition}
\newtheorem{definition}[theorem]{Definition}
\newtheorem{remark}[theorem]{Remark}
\newcommand{\prob}[1]{\mathcal P(#1)}
\newcommand{\e}{{\rm{e}}} 
\renewcommand{\mm}{\mathfrak m}
\newcommand{\ep}{\varepsilon}
\newcommand{\Der}[2]{{ \rm Der}^{{#1},{#2}}}
\newcommand{\Derr}[1]{{ \rm Der}^{#1}}
\newcommand{\Derri}{{ \rm Der}_b}
\newcommand{\Deri}{{ \rm Der}_{L}}
\newcommand{ \Derii}{\Deri^{\infty}}
\newcommand{\sppi}{{\mbox{\scriptsize\boldmath$\pi$}}}
\newcommand{\Lipb}{{\rm Lip}_b(X,\sfd)}
\newcommand{\Lipk}{{\rm Lip}_0(X,\sfd)}
\newcommand{\J}I
\title{Sobolev and BV spaces on metric measure spaces \\ via derivations and integration by parts}
\begin{document}
\author{Simone Di Marino\thanks{Scuola Normale Superiore, Pisa. email: \textsf{simone.dimarino@sns.it}} }

\maketitle

\begin{abstract}
We develop a theory of BV and Sobolev Spaces via integration by parts formula in abstract metric spaces; the role of vector fields is played by Weaver's metric derivations. The definition hereby given is shown to be equivalent to many others present in literature.
\end{abstract}

\section*{Introduction}

In the last few years a great attention has been devoted to the
theory of Sobolev spaces $W^{1,q}$ on metric measure spaces
$(X,\sfd,\mm)$, see for instance \cite{Heinonen07,Hajlasz-Koskela,Bjorn-Bjorn11}
for an overview on this subject and \cite{AGS11} for more recent developments.
These definitions of Sobolev spaces
usually come with a weak definition of modulus of gradient, in
particular the notion of $q$-upper gradient has been introduced in
\cite{Koskela-MacManus} and used in \cite{Shanmugalingam00} for a
Sobolev space theory. Also, in \cite{Shanmugalingam00} the notion of
minimal $q$-upper gradient has been proved to be equivalent to the
notion of relaxed upper gradient arising in Cheeger's paper
\cite{Cheeger00}. In \cite{AGS11} the definitions of $q$-relaxed slope and $q$-weak upper gradient are given and the minimal ones are seen to be equivalent to the ones in \cite{Shanmugalingam00}.

All of those approach give us a notion of \emph{modulus of the gradient} instead of the gradient itself, and an integration by parts formula is present only in special cases, and moreover it is often only an inequality. In this paper we want to fill this gap, namely giving a definition of Sobolev spaces more similar to the classical one given with an integration by parts formula; in $\R^n$ this formula can be written as
$$ \int_{\R^n} {\rm div} (\textbf{v}) f \, \d x = -\int_{\R^n} \textbf{v} \cdot \nabla f \, \d x, $$
where $f$ and $\textbf{v}$ are smooth functions and $\textbf{v}$ is with compact support. The usual definition of the Sobolev space $W^{1,p}$ then can be seen like this: $f \in L^p $ is a Sobolev function if there exists a function $g_f \in L^p(\R^n ; \R^n)$ such that
\begin{equation}\label{eqn:weake1} \int_{\R^n} {\rm div}(\textbf{v}) f \, \d x = - \int_{\R^n} \textbf{v} \cdot g_f \, \d x  \qquad \forall \textbf{v} \in C^{\infty}_c( \R^n;\R^n ). \end{equation}
Another equivalent formulation is that there exists a constant $C$ such that
\begin{equation}\label{eqn:weake2} \left| \int_{\R^n} {\rm div}(\textbf{v}) f \, \d x \right| \leq C \cdot \| \textbf{v} \|_{L^q(\R^n;\R^n)} \qquad \forall \textbf{v} \in C^{\infty}_c( \R^n;\R^n ),\end{equation}
where one can recover the weak gradient $g_f$ by a simple duality argument when $p >1$. Note that \eqref{eqn:weake1} and \eqref{eqn:weake2} give the same space when $1<p< \infty $,  while they differ with $p=1$: in that case \eqref{eqn:weake1} is the definition for $W^{1,1}(\R^n)$ while \eqref{eqn:weake2} is the usual definition for the space $BV(\R^n)$. 
This definition can be generalized in any metric measure space; the problem is to find the correct generalization of vector fields in an abstract metric space are the \emph{derivations}.

The derivations were introduced in the seminal papers by Weaver, and then in more recent times widely used in the Lipschitz theory of metric spaces, for example in connection with Rademacher theory for metric spaces, but also as a generalization of sections of the tangent space \cite{Schioppa, Schioppacur, Bate, Gigli14, atrev}. Here we see that the derivations are also powerful tools in the Sobolev theory, as already point out in \cite{Gigli14}. A derivation is simply a linear map $\bb : \Lipk \to L^0(X,\mm)$ such that the Liebniz rule holds and it has the locality property $| \bb (f) | \leq g \cdot  {\rm lip}_a (f)$ for some $g \in L^0(X,\mm)$. Now we simply say that $f \in L^p$ is a function in $W^{1,p}$ if there is a linear map $L_f$ such that integration by part holds:
$$ \int_X L_f(\bb) \, \d \mm = - \int_X f \cdot \div \bb \, \d \mm \qquad \forall \bb \in \Der qq,$$ 
where $\Der qq$ is the subset of derivation for which $|\bb|, \div \bb \in L^q(X,\mm)$.

We will see that it is well defined a proper ``differential" $df: \Der qq \to L^1$, and so it is possible to provide also a notion of modulus of the gradient $|\nabla f|$ in such a way that $| df(\bb) | \leq | \nabla f| \cdot |\bb|$; in Section \ref{sec:eq} we see that this notion coincides with all the other (equivalent) notion of modulus of the gradient given in \cite{AGS11}, and in particular there is also identification of the Sobolev spaces.

The easy part is the inclusion of the Sobolev Space obtained via relaxation of the asymptotic Lipschitz constant into the one defined by derivations. The other inclusion uses the fact that $q$-plans, namely measures on the space of curves with some integrability assumptions, induces derivations thanks to the basic observation that, even in metric spaces, we can always take the derivative of Lipschitz functions along absolutely continuous curves; this observation has already been used in \cite{Schioppa, Schioppacur, Bate} to find correlations between the differential structure of $(X,\sfd)$ and the structure of measures on the set of curves (a peculiar role is played by Alberti representation).

In Section \ref{sec:eq1} we extend this equivalence to the $BV$ space, using the results in \cite{ADM}.

\section{Sobolev spaces via derivations}\label{sec:ndef}

Here $(X, \sfd, \mm)$ will be any complete separable metric measure space, where $\mm$ is a nonnegative Borel measure, finite on bounded sets; in particular we don't put assume structural assuption, namely doubling measure nor a Poincar\'e inequality to hold. In the sequel we will denote by $\Lipk$ the set of Lipschitz functions with bounded support (the support of a continuous function $f$ is defined as ${\rm supp}(f)=\overline{ \{ f \neq 0\} }$), and with $L^0(X,\mm)$ the set of $\mm$-measurable function on $X$, without integrability assumption.

\subsection{Derivations}

We state precisely what we mean here by derivations:

\begin{definition} A derivation $\bb$ is a linear map $\bb: \Lipk \to L^0(X, \mm)$ such that the following properties hold:
\begin{itemize}
\item[(i)] (Liebniz rule) for every $f,g \in \Lipk$, we have $\bb (fg) =  \bb(f)g + f \bb(g)$;
\item[(ii)] (Weak locality) There exists some function $g \in L^0(X, \mm)$ such that
$$ | \bb (f) | (x) \leq g(x) \cdot { \rm lip}_af (x) \qquad \text{ for $\mm$-a.e. }x, \, \forall f \in \Lipk.$$
The smallest function $g$ with this property is denoted by $|b|$.
\end{itemize}
\end{definition}

From now on, we will refer to the set of derivation as ${\rm Der}(X, \sfd, \mm) $ and when we write $\bb \in L^p$ we mean $| \bb| \in L^p$. Since the definition of derivation is local on open sets we can extend $\bb$ to locally Lipschitz functions. We define also the support of a derivation ${\rm supp}(\bb)$ as the essential closure of the set $\{ |\bb| \neq 0\}$; it is easy to see that if ${\rm supp}(f) \cap {\rm supp}(\bb)=\emptyset$ then $\bb(f)$ is identically $0$.

In order to get to \eqref{eqn:weake2}, we need also the definition of divergence, and this is done simply imposing the integration by parts formula: whenever $\bb \in L^1_{\rm loc}$ we define $\div \bb$ as the operator that maps $\Lipk \ni f \mapsto - \int_X \bb (f) \, \d \mm$ (whenever this makes sense). We will say $\div \bb \in L^p$ when this operator has an integral representation via an $L^p$ function: $\div \bb =h \in L^p$ if
$$  -\int_X \bb  (f) \, \d \mm = \int_X h \cdot f \, \d \mm \qquad  \forall f \in \Lipk.$$
It is obvious that if $\div \bb \in L^p$, then is unique. Now we set
$$\Derr p (X,\sfd, \mm) = \bigl\{ \bb \in {\rm Der}(X, \sfd, \mm) \; : \; \bb \in L^p (X, \mm) \bigr \} $$
$$ \Der {p_1}{p_2} (X, \sfd, \mm) = \bigl \{ \bb \in  {\rm Der}(X, \sfd, \mm) \; : \; \bb \in L^{p_1} (X, \mm),  \div \bb\in L^{p_2} (X, \mm) \bigr\} $$
We will often drop the dependence on $(X, \sfd, \mm)$ when it is clear. We notice that ${\rm Der}$, $\Derr p$ and $\Der {p_1}{p_2}$ are real vector spaces, the last two being also Banach spaces endowed respectively with the norm $\| \bb\|_p = \| | \bb | \|_p$ and $\| \bb \|_{p_1,p_2} = \|  \bb \|_{p_1} + \|{\rm div} \, \bb \|_{p_2}$. For brevity we will denote $\Derr \infty=\Derri$, and $\Der \infty \infty =\Deri$ (b stands for bounded while L stands for Lipschitz). The last space we will consider is $D(\div)$, that will be consisting of derivation $\bb$ such that $|\bb|, \div \bb \in L^1_{\rm loc} (X, \mm)$; it is clear that $\Der pq \subseteq D(\div)$ for all $p,q \in [1, +\infty]$.

In the sequel we will need a simple operation on derivations, namely the multiplication by a scalar function: let $u \in L^0(X, \mm)$, then we can consider the derivation $u \bb$ that acts simply as $u \bb (f)  (x)= u(x) \cdot \bb (f)(x) $: it is obvious that this is indeed a derivation. We now prove a simple lemma about multiplications:

\begin{lemma}\label{lem:multi} Let $\bb$ a derivation and $u \in L^0(X,\mm)$; then we have $|u \bb |= | \bb| \cdot |u|$. Moreover, if $u \in \Lipb$ and  $\bb \in \Der{p_1}{p_2}$ we have that $ u\bb$ is a derivation such that 
$$\div ( u \bb) = u \div \bb + \bb (u) \qquad \text{ and } \qquad u \bb \in \Der {p_1}{p_3},$$
where $p_3=\max \{p_1,p_2\}$; in particular we have that $\Der pp$ is a $\Lipb$-module.
\end{lemma}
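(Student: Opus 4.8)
The plan is to separate the two assertions: first the modulus identity $|u\bb|=|u|\,|\bb|$, which holds for arbitrary $u\in L^0(X,\mm)$, and then the divergence formula together with the integrability, where the hypothesis $u\in\Lipb$ enters.

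For the identity I would argue by two inequalities. Since $(u\bb)(f)=u\cdot\bb(f)$ pointwise, weak locality of $\bb$ gives $|(u\bb)(f)|=|u|\,|\bb(f)|\le |u|\,|\bb|\cdot{\rm lip}_a f$ $\mm$-a.e., so $|u|\,|\bb|$ is an admissible locality function for $u\bb$ and hence $|u\bb|\le|u|\,|\bb|$. For the reverse inequality I would localize to $A=\{|u|>0\}$: there, dividing the defining inequality $|u|\,|\bb(f)|\le|u\bb|\cdot{\rm lip}_a f$ by $|u|$ shows that the measurable function equal to $|u\bb|/|u|$ on $A$ and to $|\bb|$ on $X\setminus A$ is an admissible locality function for $\bb$; minimality of $|\bb|$ then yields $|u|\,|\bb|\le|u\bb|$ on $A$, while on $\{u=0\}$ both sides vanish.

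For the divergence, note that $u\bb$ is already a derivation (the Leibniz and locality properties are immediate, the latter from the identity just proved). The core computation rewrites, for $f\in\Lipk$, the action of $u\bb$ by means of the Leibniz rule for $\bb$: since $u\in\Lipb$ the product $uf$ is Lipschitz with bounded support, hence $uf\in\Lipk$, and $u\,\bb(f)=\bb(uf)-f\,\bb(u)$. Integrating and invoking the definition of $\div\bb$ with the test function $uf$ gives $-\int_X(u\bb)(f)\,\d\mm=-\int_X\bb(uf)\,\d\mm+\int_X f\,\bb(u)\,\d\mm=\int_X(u\,\div\bb+\bb(u))\,f\,\d\mm$, which is exactly $\div(u\bb)=u\,\div\bb+\bb(u)$. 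For integrability, the modulus identity and boundedness of $u$ give $|u\bb|=|u|\,|\bb|\le\|u\|_\infty\,|\bb|\in L^{p_1}$, so $u\bb\in\Derr{p_1}$; for the divergence, $|u\,\div\bb|\le\|u\|_\infty\,|\div\bb|\in L^{p_2}$, while weak locality together with ${\rm lip}_a u\le\Lip(u)$ gives $|\bb(u)|\le\Lip(u)\,|\bb|\in L^{p_1}$, so $\div(u\bb)$ splits into an $L^{p_2}$ and an $L^{p_1}$ contribution and $u\bb\in\Der{p_1}{p_3}$ with $p_3=\max\{p_1,p_2\}$. In the balanced case $p_1=p_2=p$ both contributions lie in $L^p$, whence $\Der pp$ is stable under multiplication by $\Lipb$, i.e.\ a $\Lipb$-module.

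I expect the main obstacle to be the divergence identity: the delicate points are verifying that $uf\in\Lipk$ so that it is a legitimate test function, and checking that all integrals converge so the passage through $\bb(uf)$ is licit — this relies on $uf$ having bounded support, on $\mm$ being finite on bounded sets, and on $|\bb|\in L^{p_1}\subseteq L^1_{\rm loc}$. A secondary subtlety is the measurability/minimality step in the reverse modulus inequality. Finally, since on an infinite measure space $L^{p_1}+L^{p_2}$ is not in general contained in a single $L^{p_3}$, the cleanest unconditional output is the two separate memberships above, which collapse to $L^p$ precisely in the balanced case $p_1=p_2=p$ — the case used for the module statement.
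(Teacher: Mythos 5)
Your proof is correct and follows essentially the same route as the paper: the same two-sided localization argument for $|u\bb|=|u|\,|\bb|$ (the paper multiplies by $g=1/u$ on $\{u\neq0\}$ and exploits the chain of inequalities, where you invoke minimality of $|\bb|$ directly, but this is the same idea), and the same Leibniz-plus-integration computation for $\div(u\bb)$. Your closing observation is also well taken: the unconditional conclusion for the divergence is membership in $L^{p_1}+L^{p_2}$ rather than in $L^{\max\{p_1,p_2\}}$, and only the balanced case $p_1=p_2=p$, which is what the module statement needs, is immune to this.
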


\begin{proof} Let us prove the first assertion: it is clear that $|u \bb | (x)\leq |\bb|(x) \cdot |u(x)|$ by definition; the other inequality is obvious in the set $\{ u =0 \}$. In order to prove the converse inequality also in $\{ u \neq 0\}$ we can choose $\bb_u= u \bb$ and let
$$ g(x)= \begin{cases} u^{-1}(x) \qquad & \text{ if }u(x) \neq 0 \\ 0 & \text{ otherwise.}\end{cases} $$
Then we know that $| g \bb_u | \leq |g| \cdot |\bb_u|$. Noting that $\bb(f)=g\bb_u(f)$ in $\{u \neq 0\}$ for every $f \in \Lipb$, we get also $| \bb| = | g \bb_u |$ in the same set and so 
$$|\bb|=|g \bb_u| \leq |g| \cdot |\bb_u| \leq |g| \cdot |u| \cdot |\bb|=|\bb| \qquad  \text{ in }\{u\neq 0\}.$$
In particular we have $ | \bb_u| = |\bb| \cdot |u|$ in $\{u\neq 0\}$ and thus the thesis.

For the second equality we can use Liebniz rule: let $f \in \Lipb$, and using $\bb (fu) = u \bb(f) + f\bb(u)$
\begin{align*} we have
   - \int_X u \bb(f)  \, \d \mm  &=  - \int_X \bb (fu) \, \d \mm+ \int_X f \bb(u) \, \d \mm \\
 & = \int_X f u \cdot \div \bb  \, \d \mm +\int_X f \bb(u) \, \d \mm\\
 &= \int_X f \cdot ( u \div \bb + \bb (u) ) \, \d \mm
 \end{align*}
 and so, thanks to the arbitrariness of $f$ we get $\div (u \bb) =  u \div \bb + \bb (u) $. 
\end{proof}

\begin{lemma}[Stong locality in $D(\div)$] Let $\bb \in D(\div)$. Then for every $f,g \in {\rm Lip}(X,\sfd)$ we have
\begin{itemize}
\item[(i)] $\bb(f)=\bb(g)$ $\mm$-almost everywhere in $\{f=g\}$;
\item[(ii)] $\bb(f) \leq |\bb| \cdot {\rm lip_a} (f|_C)$ $\mm$-almost everywhere in $C$, for every closed set $C$;
\end{itemize}
\end{lemma}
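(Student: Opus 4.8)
The plan is to deduce both statements from weak locality together with the integrability of the divergence, the bridge being a chain rule for derivations. First I would record that chain rule: for $\bb\in D(\div)$, $f\in{\rm Lip}(X,\sfd)$ and $\phi\in C^1(\R)$ with bounded derivative one has $\bb(\phi\circ f)=(\phi'\circ f)\,\bb(f)$. For polynomials this is just the Leibniz rule (i) together with induction; for a general $\phi$ one approximates $\phi$, uniformly with its derivative, by polynomials $p$ on the (locally bounded) range of $f$ and passes to the limit, the error being controlled through weak locality by $|\bb((\phi-p)\circ f)|\le|\bb|\,\|\phi'-p'\|_\infty\,{\rm lip}_a f$. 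Since a derivation is local on open sets, it suffices to run this on the support of the test functions used below, where $f$ is bounded.

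For (i), by linearity it is enough to prove that $\bb(h)=0$ $\mm$-a.e. on $\{h=0\}$ for the single function $h:=f-g\in{\rm Lip}(X,\sfd)$. Fix $\eta\in\Lipk$ and a profile $\chi\in C^\infty_c(\R)$ with $\chi\equiv1$ near $0$, and set $\phi_\eps(t)=t\,\chi(t/\eps)$. Then $\phi_\eps\in C^1_c(\R)$ with $\phi_\eps(0)=0$ and $\|\phi_\eps\|_\infty\to0$, while $\phi_\eps'$ is uniformly bounded and $\phi_\eps'(t)\to\mathbbm{1}_{\{t=0\}}$ pointwise. Combining the chain rule, the Leibniz rule, and the definition of $\div\bb$ applied to the test function $\eta\,\phi_\eps(h)\in\Lipk$ yields
\[ \int_X \eta\,\phi_\eps'(h)\,\bb(h)\,\d\mm = -\int_X \eta\,\phi_\eps(h)\,\div\bb\,\d\mm-\int_X \phi_\eps(h)\,\bb(\eta)\,\d\mm . \]
Letting $\eps\to0$, the right-hand side vanishes because $\phi_\eps(h)\to0$ uniformly while $\eta\,\div\bb$ and $\bb(\eta)$ lie in $L^1(X,\mm)$ (here $|\bb|,\div\bb\in L^1_{\rm loc}$ and $\eta$ has bounded support), whereas by dominated convergence the left-hand side tends to $\int_{\{h=0\}}\eta\,\bb(h)\,\d\mm$. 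As $\eta$ is arbitrary, $\bb(h)=0$ $\mm$-a.e. on $\{h=0\}$. I expect this limiting step to be the crux of the whole lemma: it is precisely here that the hypothesis $\bb\in D(\div)$ enters essentially, since weak locality alone cannot give (i) because ${\rm lip}_a h$ need not vanish on $\{h=0\}$.

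For (ii) I would bootstrap from (i) by a McShane extension argument. Fix a closed set $C$ and a rational $\lambda>0$. For every $x\in C$ with ${\rm lip}_a(f|_C)(x)<\lambda$ there is a radius $r>0$ such that $f$ is $\lambda$-Lipschitz on $A:=C\cap B_r(x)$; its McShane extension $g(y)=\inf_{z\in A}\{f(z)+\lambda\,\sfd(y,z)\}$ is $\lambda$-Lipschitz on $X$, coincides with $f$ on $A$, and hence satisfies ${\rm lip}_a g\le\lambda$ everywhere. By (i), $\bb(f)=\bb(g)$ $\mm$-a.e. on $A$, and by weak locality $\bb(g)\le|\bb|\,{\rm lip}_a g\le\lambda\,|\bb|$, so $\bb(f)\le\lambda\,|\bb|$ $\mm$-a.e. on $A$. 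Covering $\{{\rm lip}_a(f|_C)<\lambda\}$ by countably many such balls (using separability) gives $\bb(f)\le\lambda\,|\bb|$ $\mm$-a.e. on $\{{\rm lip}_a(f|_C)<\lambda\}$. Intersecting the corresponding full-measure sets over all rational $\lambda$ and letting $\lambda\downarrow{\rm lip}_a(f|_C)(x)$ then yields $\bb(f)\le|\bb|\,{\rm lip}_a(f|_C)$ $\mm$-a.e. on $C$. The only points needing care are the Borel measurability of $x\mapsto{\rm lip}_a(f|_C)(x)$ and the verification that the McShane extension leaves $f$ unchanged on $A$, both of which are routine.
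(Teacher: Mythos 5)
Your proof is correct, and it splits naturally into two halves of different character. For part (ii) you follow essentially the same route as the paper: McShane extension of $f$ from $C\cap B_r(x)$ with the local Lipschitz constant, part (i) to transfer $\bb(f)=\bb(g)$ onto $C\cap B_r(x)$, then weak locality of the extension; your bookkeeping via the sublevel sets $\{{\rm lip}_a(f|_C)<\lambda\}$ and a Lindel\"of covering is just a reshuffling of the paper's ``drop $y$ and let $r\to 0$'' step, and as you suspect the measurability of ${\rm lip}_a(f|_C)$ is not actually needed (one only unions countably many null sets indexed by rational $\lambda$). For part (i) your mechanism is genuinely different. The paper truncates $f$ itself by $\phi_\eps(x)=(x-\eps)_+-(x+\eps)_-$, uses equi-integrability to extract a weak $L^1$ limit of $\bb(f_\eps)$, identifies that limit as $\bb(f)$ by testing against $\rho\bb$ and integrating by parts, and concludes from ${\rm lip}_a(f_\eps)=0$ on the open set $\{|f|<\eps\}$; you instead prove a $C^1$ chain rule $\bb(\phi\circ f)=(\phi'\circ f)\,\bb(f)$ (which does follow from Leibniz plus weak locality via polynomial approximation, with the locality-on-open-sets extension handling unbounded supports) and then run a single dominated-convergence computation with $\phi_\eps(t)=t\chi(t/\eps)$, exploiting $\phi_\eps'\to\mathbbm{1}_{\{t=0\}}$. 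Both arguments ultimately rest on the same two pillars --- a one-parameter family of truncations adapted to $\{h=0\}$ and the integration-by-parts formula, which is where $\bb\in D(\div)$ enters, as you correctly flag --- but yours trades the weak-compactness/limit-identification step for an auxiliary chain-rule lemma that the paper never states; the payoff is a more elementary limit passage, at the cost of having to justify the chain rule (and the extension of Leibniz and weak locality to locally Lipschitz functions) carefully. I see no gap in either half.
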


\begin{proof} In order to prove (i), it is sufficient to consider $g=0$ and $f$ with support contained in $B=B_r(x_0)$, where we can take $r>0$ as small as we want; then we can conclude by linearity and weak locality. So we can suppose that both $|\bb|$ and $\div \bb$ are integrable in $B$. Now we can consider $\phi_{\ep}(x)= (x-\ep)_+ - (x+\ep)_-$; we have $\phi_{\ep}$ is a $1$-Lipschitz function such that $|\phi_{\ep}(x)-x| \leq \ep$ and $\phi(x)=0$ whenever $|x|\leq \ep$. Let $f_{\ep} = \phi_{\ep}(f)$; we have $\bb(f_{\ep})$ is a family of equi-integrable functions and so there is a subsequence converging weakly in $L^1$ to some function $g$. Moreover $f_{\ep} \to f$ uniformly and in particular 
\begin{equation}\label{eqn:convergence} \int_X \bb (f_{\ep})\, \d \mm - \int_X \bb(f)\, \d \mm = - \int_X (f_{\ep}- f)\cdot \div \bb \, \d \mm  \to 0; \end{equation}
since this is true also for $\rho \bb$ whenever $\rho \in \Lipk$ (by Lemma~\ref{lem:multi}, we have $\rho \bb \in D(\div )$), we obtain $\langle \rho, \bb(f_{\ep}) \rangle \to \langle \rho, \bb(f) \rangle $ for every $\rho \in \Lipb$ and so $g=\bb(f)$ and $\bb(f_{\ep}) \weakto \bb(f)$. In particular, letting $\rho= \chi_{\{f=0\}} {\rm sgn} (\bb(f))$ and noting that ${\rm lip}_a (f_{\ep} )=0$ in the set $\{ |f| < \ep \}$ we obtain
$$\int_{\{f=0\}} |\bb (f)| \, \d \mm = \int_X \rho \cdot \bb (f) = \lim_{\ep \to 0} \int_X \rho \cdot \bb(f_{\ep}) \, \d \mm =0.$$

For (ii) we proceed as follows: for every closed ball $\bar{B}_r(y)$ we consider the McShane extension of the function $f$ restricted to $C \cap \bar{B}_r(y)$ and we call it $g^r_y$:
$$g^r_y(x)= \sup \{ f(x')- L\sfd(x',x)\; :\; x' \in C \cap \bar{B}_r(y)\}, \qquad L={\rm Lip}(f, \bar{B}_r(y) \cap C)$$
In particular we have $f=g^r_y$ on $C \cap B_r(y)$ and ${\rm Lip}(g^r_y , B_r(y)) = {\rm Lip}(f, B_r(y) \cap C) = {\rm Lip}(f|_C, B_r(y))$. Applying (i) of this lemma we find that $\bb(f)=\bb(g^r_y)$ $\mm$-a.e. on $C \cap \bar{B}_r(y)$; in particular
$$ |\bb(f) | (x) \leq |\bb| \cdot {\rm Lip} (f|_C, \bar{B}_r(y) ) \qquad \mm \text{-a.e. on } C \cap B_r(y).$$
Since we have $\bar{B}_r(y) \subset B_{2r}(x)$ whenever $x \in B_r(y)$, we obtain
$$ |\bb(f) | (x) \leq |\bb| \cdot {\rm Lip} (f|_C, B_{2r}(x) ) \qquad \mm \text{-a.e. on } C \cap B_r(y);$$
now we can drop the dependance on $y$ and then let $r \to 0$ to get the thesis.
\end{proof}

\begin{remark} Notice that our definition of derivation is slightly different from the classical one of Weaver \cite{W00}, since we don't require any continuity assumption. However it is easy to see that every derivation in $D(\div)$ is also a Weaver derivation, thanks to the integration by part formula. In the sequel only these derivations will play a role in the definition of Sobolev Spaces and so this discrepancy in the definition is harmful.
\end{remark}

\subsection{Definition via derivations}

In this whole section we treat the Sobolev spaces $W^{1,p}$ with $1\leq p<+\infty$; the case of the space $BV$ will be treated separately. We state here the main definition of Sobolev space via derivations: we want to follow the definition \eqref{eqn:weake2} but in place of the scalar product between the vector field and the weak gradient we assume there is simply a continuous linear map.

\begin{definition}\label{def:deri}  Let $f \in L^p(X,\mm)$; then $f \in W^{1,p}(X, \sfd, \mm)$ if, setting $p=q/(q-1)$, there exists a linear map $L_f: \Der qq \to L^1(X,\mm)$ satisfying
\begin{equation}\label{eq:defw}
\int_X L_f(\bb)\,d\mm=-\int_X f \div \bb\,d\mm\qquad\text{for all $\bb\in \Der qq$,}
\end{equation}
continuous with respect to the $\Derr q$ norm and such that $L_{f}(h \bb)=hL_f(\bb)$ for every $h \in {\rm Lip}_b, \bb \in \Der qq$. When $p=1$ assume also that $L_f$ can be extended to an $L^{\infty}$-linear map in $\Deri^{\infty}:=L^{\infty} \cdot \Deri$.
\end{definition}

Since from the definition it is not obvious, we prove that $L_f(\bb)$ is uniquely defined whenever $f \in W^{1,p}$ and $\bb \in \Der qq$:

\begin{remark}[Well posedness in $\Der qq$] Let us fix $\bb \in \Der qq$, $f \in W^{1,p}$; let $L_f$ and $\tilde{L}_f$ be two different linear maps given in the definition on $W^{1,p}$. Let $h \in \Lipb$: using Lemma \ref{lem:multi} we have $ h \bb \in \Der qq$ and so we can use \eqref{eq:defw} and the $L^{\infty}$-linearity to get
$$ \int_X h L_f(\bb) \, \d \mm = \int_X L_f (h \bb)= -\int_X f \div ( h\bb) \, \d \mm,  $$
and the same is true for $\tilde{L}_f$. In particular, since the right hand side does not depend on $L_f$, we have $\int_X h L_f( \bb) = \int_X h \tilde{L}_f (\bb)$, and thanks to the arbitrariness of $h \in \Lipb $ we conclude that $L_f( \bb) = \tilde{L}_f( \bb)$ $\mm$-a.e. We will call this common value $\bb(f)$, since it extends $\bb$ on Lipschitz functions. The same result is true also for $p=1$ and $\bb \in \Derii$.
\end{remark}
Now we can give the definition of weak gradient, in some sense dual to the definition of $|\bb|$:

\begin{theorem}\label{thm:improved} Let $f \in W^{1,p}(X, \sfd, \mm)$; then there exists a function $g_f \in L^p(X, \mm)$ such that
\begin{equation}\label{eqn:improvedth}
| \bb(f)| \leq g_f \cdot | \bb|  \quad \mm \text{-a.e. in }X \; \;\qquad \forall \bb \in \Der qq.
\end{equation}
The least function $g_f$ (in the $\mm$-almost everywhere sense) that realizes this inequality is denoted with $| \nabla f |_p$, the $p$-weak gradient of $f$
\end{theorem}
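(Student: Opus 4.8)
The plan is to produce $|\nabla f|_p$ as an essential supremum of the pointwise ratios $|\bb(f)|/|\bb|$; the work splits into giving each such ratio a uniform $L^p$ bound and then arranging the family so that its essential supremum stays in $L^p$. The enabling preliminary is to enlarge the class of test objects. Write $T(\bb):=\bb(f)=L_f(\bb)$ for the well-posed value of the previous Remark. Continuity of $L_f$ for the $\Derr q$-norm means $\|T(\bb)\|_{1}\le C\,\||\bb|\|_{q}$, so $T$ extends uniquely to a continuous linear $\bar T\colon V\to L^1(X,\mm)$, where $V$ is the closure of $\Der qq$ in $\Derr q$. I would first check that $V$ is stable under multiplication by bounded measurable functions and that $\bar T$ is $L^\infty$-linear: for $\bb\in\Der qq$ and $u\in L^\infty$ pick bounded Lipschitz $u_n\to u$ in $L^q(|\bb|^q\mm)$ (density of Lipschitz functions for the finite measure $|\bb|^q\mm$), so $u_n\bb\to u\bb$ in $\Derr q$ and, passing to the limit in $T(u_n\bb)=u_n\,\bb(f)$ (Lemma~\ref{lem:multi}), one gets $\bar T(u\bb)=u\,\bb(f)$; the general case $\beta\in V$ follows by density since $\||u|\,|\beta-\bb_k|\|_q\le\|u\|_\infty\|\beta-\bb_k\|_{\Derr q}$. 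For $p=1$ ($q=\infty$) this approximation is unavailable, which is exactly why Definition~\ref{def:deri} builds the $L^\infty$-linearity into the hypothesis; with that proviso both cases proceed alike. A first consequence is $\chi_{\{|\bb|=0\}}T(\bb)=T(\chi_{\{|\bb|=0\}}\bb)=0$, since $\chi_{\{|\bb|=0\}}\bb$ is the null derivation; thus $T(\bb)$ vanishes $\mm$-a.e. on $\{|\bb|=0\}$.

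Next I would extract a pointwise bound for each $\beta\in V$. Put $\phi_\beta:=|\bar T(\beta)|/|\beta|$ on $\{|\beta|>0\}$ and $\phi_\beta:=0$ elsewhere, so that $|\bar T(\beta)|=\phi_\beta\,|\beta|$ holds everywhere. For $u\in L^\infty$ the $L^\infty$-linearity and the bound give, with $w:=|u|\,|\beta|$,
\[ \int_X w\,\phi_\beta\,\d\mm=\int_X|u|\,|\bar T(\beta)|\,\d\mm=\|\bar T(u\beta)\|_1\le C\,\||u|\,|\beta|\|_q=C\|w\|_q. \]
As $u$ runs through $L^\infty$ the weights $w$ exhaust, by truncation and monotone approximation, a dense subset of the nonnegative cone of $L^q(\{|\beta|>0\})$; hence by $L^q$--$L^p$ duality (for $p=1$ simply by monotone convergence) one obtains $\phi_\beta\in L^p$ with $\|\phi_\beta\|_p\le C$, \emph{uniformly} in $\beta$.

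The final step is the essential supremum, for which the family $\{\phi_\beta:\beta\in V\}$ must be upward directed. Given $\beta_1,\beta_2$ and $A:=\{\phi_{\beta_1}\ge\phi_{\beta_2}\}$, the glued derivation $\beta_3:=\chi_A\beta_1+\chi_{A^c}\beta_2$ again lies in $V$ (here leaving $\Der qq$ is essential, since $\beta_3$ need not have $L^q$ divergence), and the disjoint-support identities $|\beta_3|=\chi_A|\beta_1|+\chi_{A^c}|\beta_2|$ and $\bar T(\beta_3)=\chi_A\bar T(\beta_1)+\chi_{A^c}\bar T(\beta_2)$ yield $\phi_{\beta_3}=\max(\phi_{\beta_1},\phi_{\beta_2})$ $\mm$-a.e. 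Setting $M:=\sup_\beta\int_X\phi_\beta^p\,\d\mm\le C^p$ and choosing $\beta_n$ with $\int\phi_{\beta_n}^p\to M$, directedness lets me take $\phi_{\beta_n}$ increasing, so $\phi_{\beta_n}\uparrow g_f$ with $\int g_f^p=M\le C^p$ by monotone convergence; thus $g_f\in L^p$. If some $\beta$ had $\phi_\beta>g_f$ on a set $E$ of positive measure, gluing $\beta$ with $\beta_n$ would give $\int\phi^p\ge\int_E\phi_\beta^p+\int_{E^c}\phi_{\beta_n}^p\to\int_E\phi_\beta^p+\int_{E^c}g_f^p>M$, contradicting the definition of $M$; hence $\phi_\beta\le g_f$ $\mm$-a.e. for every $\beta$, i.e. $|\bb(f)|\le g_f\,|\bb|$ for all $\bb\in\Der qq\subseteq V$. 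Minimality is immediate: any competitor $g$ satisfies $g\ge\phi_\beta$ a.e. for all $\beta$, hence $g\ge g_f$, and one sets $|\nabla f|_p:=g_f$.

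I expect the main obstacle to be the legitimacy of the $L^\infty$-localisation: both the $L^\infty$-linearity of $\bar T$ and the gluing construction force one out of $\Der qq$ into its $\Derr q$-closure $V$, and one must verify throughout that divergence bounds are never invoked once $\bar T$ has been extended. The $p=1$ endpoint is the delicate face of this, handled precisely by the extra $\Derii$-linearity assumed in Definition~\ref{def:deri}.
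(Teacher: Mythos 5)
Your proof is correct in substance but follows a genuinely different route from the paper's. The paper first reduces \eqref{eqn:improvedth} to the integral inequality $\int_X\bb(f)\,\d\mm\le\int_X g|\bb|\,\d\mm$ and then produces $g$ by Hahn--Banach: it sandwiches the concave functional $\Psi_1(h)=\sup\{\int_X\bb(f)\,\d\mm:|\bb|\le h\}$ below the convex $\Psi_2(h)=C\|h\|_q$ on $L^q$, and represents the separating functional as an element of $L^p=(L^q)^*$ when $p>1$; for $p=1$ this duality fails and the paper goes through Riesz representation on compact sets, a least-upper-bound construction for families of measures, absolute continuity, and an exhaustion of $X$ by compacts. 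You avoid duality entirely: you extend $L_f$ to an $L^\infty$-linear map on the $\Derr q$-closure $V$ of $\Der qq$, bound each ratio $\phi_\beta=|\bar T(\beta)|/|\beta|$ in $L^p$ uniformly by testing against the weights $w=|u|\,|\beta|$, and obtain $g_f$ as the essential supremum of an upward directed family via gluing along measurable sets. The two arguments exploit the same underlying structure --- the paper's $\Psi_1$ is superadditive because near-optimal derivations can be added, your family is directed because derivations can be glued --- but yours treats $p=1$ and $p>1$ uniformly and dispenses with the entire compactness/measure-supremum detour of the paper's $p=1$ case, at the price of having to justify the $L^\infty$-module extension and the stability of $V$ under gluing (which you do, essentially correctly, via Lemma~\ref{lem:multi} and density of $\Lipb$ in $L^q$ of finite Borel measures).

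One step needs repair. You pass to the limit in $T(u_n\bb)=u_n\,\bb(f)$ using only $u_n\to u$ in $L^q(|\bb|^q\mm)$; this controls nothing on the set $\{|\bb|=0\}$, where $\bb(f)$ is not yet known to vanish --- indeed you deduce $\chi_{\{|\sbb|=0\}}\bb(f)=0$ only \emph{afterwards}, from the $L^\infty$-linearity you are in the middle of proving, so as written the argument is circular on that set. The fix is cheap: either choose $u_n\in\Lipb$ with $\|u_n\|_\infty\le\|u\|_\infty$ converging to $u$ in $L^1$ of the finite measure $(|\bb|^q+|\bb(f)|)\mm$, so that $u_n\,\bb(f)\to u\,\bb(f)$ in $L^1(\mm)$ by dominated convergence; or first prove $\bb(f)=0$ $\mm$-a.e.\ on $\{|\bb|=0\}$ directly from the bound $\bigl|\int_X h\,\bb(f)\,\d\mm\bigr|\le C\,\||h|\,|\bb|\|_q$ for $h\in\Lipb$, approximating $\chi_A$ for Borel $A\subseteq\{|\bb|=0\}$. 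With that adjustment the proof goes through, including at the endpoint $p=1$ where the extra $L^\infty$-linearity hypothesis of Definition~\ref{def:deri} plays exactly the role you assign to it.
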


%


\begin{proof} We reduce to prove the existence of a weak gradient in the integral sense; then thanks to $ {\rm Lip}_b$-linearity we can prove the theorem. In fact if we find a function $g \in L^p(X, \mm)$ such that
 \begin{equation}\label{def:wgrd} 
\int_X  \bb (f )\, \d \mm   \leq \int_X g | \bb |\, \d \mm  \qquad \forall \bb \in \Der qq, 
 \end{equation}
then, choosing $\bb_h =h \bb$ with $h \in \Lipb$, we can localize the inequality thus obtaining $\bb(f) \leq g| \bb|$; using this inequality also with the derivation $-\bb$ we get \eqref{eqn:improvedth}. 

So, we're given a function $f \in W^{1,p}$ and we want to find $g \in L^p$ satisfying \eqref{def:wgrd}; let us note that, by definition, 
there exists a constant $C=\| L_f\|$ such that for every $\bb \in \Der qq$
\begin{equation}\label{eqn:ovvio1}
 \int \bb(f) \, \d \mm \leq \| L_f ( \bb ) \|_1 \leq C \| \bb \|_q
 \end{equation}
Let us consider two functionals in the Banach space $Y=L^q(X,\mm)$:
\begin{equation}\label{eqn:psi1}
 \Psi_2 ( h) = C \| h \|_{L^q(\mm)}
\end{equation}
\begin{equation}\label{eqn:psi2}
 \Psi_1 ( h) = \sup \left\{ \int_X\bb (f) \, \d \mm : | \bb| \leq h  \; , \; \bb \in \Der qq \right\}
\end{equation}
where the supremum of the empty set is meant to be $- \infty$. Equation \eqref{eqn:ovvio1} guarantees that
\begin{equation}\label{eqn:ovvio2}
 \Psi_1( h) \leq \Psi_2 (h) \qquad \forall h \in Y.
\end{equation}
Moreover $\Psi_2$ is convex and continuous while we claim that $\Psi_1$ is concave: it is clearly positive $1$-homogeneus and so it is sufficient to show that
$$\Psi_1( h_1 + h_2 ) \geq \Psi_1(h_1) + \Psi_1(h_2). $$
We can assume that $\Psi_1(h_i) > - \infty$ for $i=1,2$ because otherwise the inequality is trivial. In this case for every $\ep>0$ we can pick two derivations $\bb_i \in \Der qq$ such that
$$ \int_X \bb_1(f) \, \d \mm \geq \Psi_1 (h_1) - \ep  \qquad | \bb_1 | \leq h_1$$
$$ \int_X \bb_2(f)  \, \d \mm \geq \Psi_1 (h_2) - \ep  \qquad |\bb_2 | \leq h_2 $$
and so we can consider $\bb_1 + \bb_2$ that still belongs to $\Der qq$ and clearly $|\bb_1 + \bb_2 | \leq |\bb_1 |+ | \bb_2 | \leq (h_1+h_2)$ and so
$$ \Psi_1 ( h_1 + h_2) \geq \int_X (\bb_1+\bb_2)(f)  \, \d \mm\geq \Psi_1 (h_1) + \Psi_1(h_2) - 2 \ep, $$
and we get the desired inequality letting $\ep \to 0$.
By Hahn-Banach theorem we can find a continuous linear functional $L$ on $L^q(X, \mm)$ such that
$$ \Psi_1(h) \leq L(h) \leq \Psi_2(h). $$

\textbf{Case  $p>1$.} We know that $(L^q)^*=L^p$ and so we can find $g \in L^p$ such that $L(h)= \int_X gh \,  \d \mm$. This proves the existence and moreover we have that $L(h) \leq \Psi_2(h) = C \|h\|_q$ for every $h \in Y$ and so we have also that $\|g\|_p  \leq C$. 

\textbf{Case $p=1$, $X$ compact.} In this case (notice that here we have to put $ \Deri^{\infty}$ in place of $\Der qq$ in \eqref{eqn:psi2}) if we restrict $L : C_b(X) \to \R$ we can see it as a positive linear such that $L(h) \leq C \| h \|_{\infty}$ and so, thanks to the compactness of $X$, it can be represented as a finite measure, i.e. there exists $\mu \in \mathcal{M}_+(X)$ such that $L(h) = \int_{X} h \, \d \mu$ for every $h \in C_0(X)$ and $\mu(X) \leq C$. Now let us fix $\bb \in \Derii$ and let 
$$h_{\ep} (x)= \begin{cases}  \frac  1 { |\sbb|}  \qquad & \text{ if }|\bb|(x)\geq \ep \\ \ep^{-1} &  \text{ otherwise}  \end{cases}$$
in such a way that $|h_{\ep}\bb| \leq1$ with equality in $\{ | \bb| \geq \ep \}$. Now let us consider for every $h \in C_0(X)$ the derivation $ h \cdot h_{\ep} \cdot \bb$; we know that $|h \cdot h_{\ep} \cdot \bb | \leq |h| $ and so we can use \eqref{eqn:psi2} and the $L^{\infty}$-linearity to infer that
$$\int_X h h_{\ep} \bb(f) \, \d \mm \leq \int_X |h| \, \d \mu \qquad \forall h \in C_0(X);$$
this permits us to localize the inequality to $h_{\ep}\bb(f) \mm \leq \mu$. Now we have a family of measures $ \mathcal{F} = \{ h_{\ep}\bb(f) \mm \; : \; \forall \bb \in \Derii ,  \, \forall \ep > 0 \}$ such that $\nu \leq \mu$ whenever $\nu \in \mathcal{F}$. Now we can consider the supremum of the measures in $\mathcal{F}$, defined as
$$\mu_{\mathcal{F}} (A) = \sup \Bigl\{ \sum_{i=1}^N \nu_i(A_i) \; : \; \nu_i \in \mathcal{F} , \, \bigcup A_i \subseteq A ,\; A_i \text{ disjoint }\Bigr\}; $$
it is readily seen that this is in fact a measure, and it is the least measure $\rho$ such that $\nu \leq \rho$ for every $\rho \in \mathcal{F}$. The existence is clear thanks to the fact that $\nu \leq \mu$, and in particular we have that $\mu_{\mathcal{F}} \leq \mu$;  moreover, since for every $\nu \in \mathcal{F}$ we have that $\nu << \mm$, also the supremum inherits this property, in particular we have $\mu_{\mathcal{F}} = g \mm$ for some $g \in L^1(\mm)$. In particular, again fixing $\bb \in \Derii$, we have that 
\begin{equation}\label{eq:quasifinito}
h_{\ep} \bb(f) \leq g \quad \mm \text{-a.e.} \qquad \forall \ep>0;
\end{equation}
in particular, we can divide \eqref{eq:quasifinito} by $h_{\ep}$ to obtain
\begin{equation}\label{eq:finito}
\begin{cases} \bb(f) \leq g | \bb | \qquad &\mm \text{-a.e. in } \{ |\bb| \geq \ep \} \\
\bb(f) \leq g \ep \qquad &\mm \text{-a.e.  in }\{| \bb| < \ep \}.
\end{cases}
\end{equation}
Since $\ep$ is arbitrary we obtain $\bb(f) \leq g |\bb|$ for $\mm$-almost every $x \in X$, that is the thesis; also in this case $p=1$ we have $\| g\|_{1} \leq \mu(X) \leq \| L_f \|$.

\textbf{ Case $p=1$, $X$ general.} In order to remove the compactness assumption, for every compact non negligible set $K \subseteq X$ let us consider the two functionals in the Banach space $Y_K=L^{\infty}(K, \sfd,\mm)$:
\begin{equation}\label{eqn:psi1k}
 \Psi_2 ( h) = C \| h \|_{L^{\infty}(K,\mm)}
\end{equation}
\begin{equation}\label{eqn:psi2k}
 \Psi_1 ( h) = \sup \left\{ \int_K \bb(f) \, \d \mm : | \bb| \leq h \quad  \mm \text{-a.e. on }K \; , \; \bb \in \Derii \right\}.
\end{equation}
Now we can argue precisely as before to obtain $g_K \in L^1(K,\mm)$ such that $\|g_K \|_1 \leq \|L_f \|$
\begin{equation}\label{eq:defg}
\bb(f) \leq g_K | \bb| \quad \mm \text{-a.e. on }K \qquad \forall \bb \in \Derr q.
\end{equation}
Now for every increasing sequence of compact sets $K_n$, let us consider $g(x)= \inf_{ K_n \ni x } g_{K_n} (x)$. Denoting $ Y:= \bigcup_n K_n$, it is easy to note that $g \in L^1(Y,\mm)$, since $\| g \|_{L^1(Y, \mm) } = \sup_n \| g \|_{L^1(K_n, \mm) } \leq  \sup_n \| g_{K_n} \|_{L^1(K_n, \mm) } \leq  \| L_f \|$, and we have that
$$ \bb(f) \leq g | \bb| \quad \mm \text{-a.e. on }Y \qquad \forall \bb \in \Derr q;$$
so, in order to conlcude, it is sufficient to find a sequence $K_n$ such that $\mm( X \setminus \bigcup_n K_n )=0 $, but this can be done thanks to the hypothesis of $\mm$ finite on bounded sets (so we can find $\theta >0$ such that $\theta \mm$ is finite and then apply Prokhorov theorem to $\theta \mm$).

\end{proof}

\section{Equivalence with other definitions}\label{sec:eq}

In this section we want to prove, when $p>1$, that Definition \ref{def:deri} is equivalent to the other ones $W_*^{1,p}$ and $W_w^{1,p}$, given in \cite{AGS11}. As a byproduct we obtain the equivalence also with other definitions of Sobolev Spaces, for example the one given in \cite{Cheeger00}, similar to $W^{1,p}_*$ but here the relaxation is made with general $L^p$ functions, and the asymptotic Lipschitz constant is replaced by upper gradients, or the one given in \cite{Shanmugalingam00}, similar to $W^{1,p}_w$ but with a slightly stronger notion of negligibility of set of curves.

 We will prove that $W_*^{1,p} \subseteq W^{1, p} \subseteq W_w^{1,p}$ and that the following inequality is true for the weak gradients:
$$| \nabla f |_{p,*} \leq | \nabla f |_p \leq | \nabla f|_{p,w} \qquad \mm \text{-a.e. in } X$$
Then for $p>1$, using the equivalence $W_*^{1,p} = W_w^{1,p}$ and $| \nabla f|_{p,w}  = |\nabla f |_{p,*}$ in \cite{AGS11} will let us conclude; also the coincidence with other definitions can be found in \cite{AGS11}. Let us recall briefly the definitions of $W^{1,p}_*$ (in the stronger version given in \cite{ACDM}) and $W^{1,p}_w$:

\begin{definition}[Relaxed Sobolev Space] A function $f \in L^p(X, \mm)$ belongs to $W^{1,p}(X, \sfd, \mm)$ if and only if there exists a sequence $(f_n) \subset \Lipk $ and a function $g \in L^p(X, \mm)$ such that
$$ \lim_{n \to \infty} \| f_n - f \|_p + \| {\rm lip}_a (f_n )- g\|_p =0. $$
The function $g$ with minimal $L^p$ norm that has this property will be denoted with $|\nabla f|_{p,*}$
\end{definition}

In order to define the space $W^{1,p}_w$ we have to introduce the test plans, that will consent to define a concept of negligibility of set of curves that is crucial in the definition of the weak Sobolev space (see also \cite{ADMS} for a detailed analysis of the different concepts of negligibility given in \cite{AGS11} and \cite{Shanmugalingam00}) .

\begin{definition}[Test plans and negligible sets of curves]\label{def:testplans}
We say that a probability measure $\ppi\in\prob{C([0,1],X)}$ is a
$q$-\emph{test plan} if $\ppi$ is concentrated on $AC^q([0,1],X)$, we have
$\iint_0^1|\dot\gamma_t|^q\d t\,\d\ppi<\infty$ and there exists a
constant $C(\ppi)$ such that
\begin{equation}
(\e_t)_ \sharp\ppi \leq C(\ppi)\mm\qquad\forall t\in[0,1].
\label{eq:1}
\end{equation}
A set $A\subset C([0,1],X)$ is said to be
$p$-\emph{negligible} if it is contained in a $\ppi$-negligible set for any $p$-test plan $\ppi$. 
A property which holds for every $\gamma\in C([0,1],X)$, except
possibly a $p$-negligible set, is said to hold for $p$-almost every
curve.
\end{definition}

\begin{definition}[Weak Sobolev Space] A function $f \in L^p(X, \mm)$ belongs to $W^{1,p}_w(X, \sfd, \mm)$ if there exists a function $g \in L^p(X, \mm)$ that is a $p$-weak upper gradient, i.e. it is such that
\begin{equation}
\label{eq:inweak} \left|\int_{\partial\gamma}f\right|\leq
\int_\gamma g<\infty\qquad\text{for $p$-a.e. $\gamma$.}
\end{equation}
The minimal $p$-weak upper gradient (in the pointwise sense) will be denoted by $| \nabla f |_{p,w}$.
\end{definition}

\subsection{ $W^{1,p}_* \subseteq W^{1,p}$}\label{ss:1eq}

Let $f \in W^{1,p}_*(X,\sfd,\mm)$. Then, by definition, there exists a sequence of Lipschitz functions with bounded support such that $f_n \stackrel{p}{\to} f$ and ${\rm Lip}_a (f_n) \stackrel{p}{\to} | \nabla f|$. Then by the weak locality property of derivations and the definition of divergence have that for every $\bb \in \Der qq$
$$\left| \int_X f_n \cdot \div \bb \, \d \mm \right| = \left|  \int_X \bb(f_n ) \, \d \mm \right| \leq \int_X | \bb | \cdot {\rm lip}_a (f_n) \, \d \mm. $$
Taking the limit as $n \to\infty$ we have that 
\begin{equation} \label{eq:ineqwg} \left | \int_X f \cdot \div \bb \, \d \mm \right| \leq \int_X | \bb| \cdot | \nabla f |_{p,*} \, \d \mm \qquad \forall \bb \in \Der qq.
\end{equation}
Now we have to construct the linear functional $L_f : \Der qq \to L^1$. 
So, fix $\bb \in \Der qq$ and let $\mu_{\sbb}= | \bb | \cdot | \nabla f|_{p,*} \mm$. Notice that $\mu_{\sbb}$ is a finite measure. Now let $R^\sbb: \Lipb \to \R$ be the linear functional defined by

$$ R^{\sbb}(h) =  - \int_X f \cdot \div (h \bb) \, \d \mm.$$
Notice that, since $h\bb \in \Der qq$, we can take it as a test derivation in \eqref{eq:ineqwg}, obtaining $| R^{\sbb}(h) | \leq C \|h \|_{\infty}$, where $C=\mu_{\sbb}(X)$. In particular $R^{\sbb}$ can be extended to a continuous linear functional on $C_b(X)$; since $|R^{\sbb}(h)| \leq \int_X | h | \, \d \mu_{\sbb}$, we have that $R^\sbb(h)$ can be represented as an integral with respect to a signed measure $\mm_\sbb$, whose total variation is less then $\mu_{\sbb}$, but since $\mu_\sbb$ is absolutely continuous with respect to $\mm$, also $\mm_\sbb$ must have this property; if we denote by $L_f(\bb)$ the density of $\mm_\sbb$ relative to $\mm$, we have
\begin{align}\label{eq:1}
- \int_X f \cdot \div (h\bb) \, \d \mm& = \int h \cdot  L_f(\bb) \, \d \mm\qquad \forall \, h \in \Lipb \\
\label{eq:2}
 \qquad |L_f(\bb)| &\leq |\bb| \cdot | \nabla f|_{p,*}  \qquad \text{ $\mm$-almost everywhere }
 \end{align}
 Now we have to check the ${\rm Lip}_b$-linearity, but this is easy since for every $h, h_1 \in {\rm Lip}_b$ by definition we have $R^{h\sbb}(h_1)=R^{\sbb}(h \cdot h_1)$; in particular
 $$\int_X h_1 \cdot L_f(h \bb) \, \d\ mm = \int_X h_1 \cdot h L_f(\bb) \, \d \mm \qquad \forall h_1 \in {\rm Lip}_b(X),$$
 and so $L_f(h \bb) = h L_f(\bb)$.

\subsection{$W^{1,p} \subseteq W^{1,p}_w$}\label{ss:2eq}

The crucial observation is that every $q$-plan induce a derivation:

\begin{proposition}\label{prop:pipi} Let $\ppi$ be a $q$-plan. For every function $f \in \Lipb$ let us consider $\bb_{\sppi}(f)$, the function such that:
\begin{equation}\label{def:bspi}
 \int_X g \cdot \bb_{\sppi} (f ) \, \d \mm = \int_{AC} \int_0^1 g(\gamma_t) \frac {d ( f \circ \gamma )}{ds} (t) \, \d t \, \d \ppi (\gamma) \qquad \forall g  \in L^p. 
 \end{equation}
 Then we have that $\bb_{\sppi} \in \Der qq$ and moreover
 \begin{equation}\label{def:modul}
 \int_X g \cdot| \bb_{\sppi}| \, \d \mm \leq \iint_{\gamma} g  \, \d s \, \d \ppi (\gamma) \qquad \forall g  \in L^p , g \geq 0;
 \end{equation}
 \begin{equation}\label{def:bspi2}
 \int_X f  \cdot \div( \bb_{\sppi}) \, \d \mm = \int_{AC} (f(\gamma_1) - f(\gamma_2)) \, \d \ppi (\gamma) \qquad \forall f  \in L^p. 
 \end{equation}
\end{proposition}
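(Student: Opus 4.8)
The plan is to produce $\bb_\sppi(f)$ by Riesz duality and then check the three claims in turn. First I would verify that, for fixed $f\in\Lipb$, the map
\[
g\longmapsto \int_{AC}\int_0^1 g(\gamma_t)\,(f\circ\gamma)'(t)\,\d t\,\d\ppi(\gamma)
\]
is a bounded linear functional on $L^p(X,\mm)$. The key input is the chain rule for a Lipschitz function along an absolutely continuous curve, $|(f\circ\gamma)'(t)|\leq {\rm lip}_af(\gamma_t)\,|\dot\gamma_t|\leq {\rm Lip}(f)\,|\dot\gamma_t|$ for a.e.\ $t$, which bounds the modulus of the functional by ${\rm Lip}(f)\iint_0^1|g(\gamma_t)|\,|\dot\gamma_t|\,\d t\,\d\ppi$. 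A single H\"older inequality in the product measure $\d t\,\d\ppi$ factors this as $\big(\iint_0^1|g(\gamma_t)|^p\,\d t\,\d\ppi\big)^{1/p}\big(\iint_0^1|\dot\gamma_t|^q\,\d t\,\d\ppi\big)^{1/q}$; the second factor is finite since $\ppi$ is a $q$-plan, and the first is $\big(\int_0^1\!\int_X|g|^p\,\d(\e_t)_\sharp\ppi\,\d t\big)^{1/p}\leq C(\ppi)^{1/p}\|g\|_p$ by the marginal bound $(\e_t)_\sharp\ppi\leq C(\ppi)\mm$. Hence the functional is bounded, and since $(L^p)^*=L^q$ it is represented by a unique $\bb_\sppi(f)\in L^q$, which defines $\bb_\sppi$ on $\Lipb$ (the maps $(\gamma,t)\mapsto g(\gamma_t)$ and $(\gamma,t)\mapsto(f\circ\gamma)'(t)$ being routinely measurable, first for bounded Borel $g$). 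The very same estimate applied to the positive functional $g\mapsto\iint_0^1 g(\gamma_t)|\dot\gamma_t|\,\d t\,\d\ppi$ yields a nonnegative density $\rho\in L^q$ with $\int_X g\rho\,\d\mm=\iint_0^1 g(\gamma_t)|\dot\gamma_t|\,\d t\,\d\ppi$ for all $g\in L^p$.

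Next I would check that $\bb_\sppi$ is a derivation and establish \eqref{def:modul}. Linearity in $f$ is clear, and the Leibniz rule comes from the pointwise identity $((fg)\circ\gamma)'=(f\circ\gamma)'\,g(\gamma_t)+f(\gamma_t)\,(g\circ\gamma)'$: substituting it into the defining relation \eqref{def:bspi} and using that $hf,hg\in L^p$ when $h\in L^p$ and $f,g\in\Lipb$ gives $\int_X h\,\bb_\sppi(fg)\,\d\mm=\int_X h\,(\bb_\sppi(f)g+f\bb_\sppi(g))\,\d\mm$, whence the rule by arbitrariness of $h$. For weak locality I would repeat the estimate of the first paragraph retaining the sharper factor ${\rm lip}_af$, obtaining $\big|\int_X g\,\bb_\sppi(f)\,\d\mm\big|\leq\int_X|g|\,{\rm lip}_af\,\rho\,\d\mm$ for all $g\in L^p$, and then localize in $g$ to deduce $|\bb_\sppi(f)|\leq\rho\,{\rm lip}_af$ $\mm$-a.e. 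This proves weak locality and shows $|\bb_\sppi|\leq\rho\in L^q$, so $\bb_\sppi\in\Derr q$; moreover \eqref{def:modul} follows at once, since for $g\geq0$ we get $\int_X g\,|\bb_\sppi|\,\d\mm\leq\int_X g\rho\,\d\mm=\iint_0^1 g(\gamma_t)|\dot\gamma_t|\,\d t\,\d\ppi$.

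Finally, for the divergence I would evaluate $\int_X\bb_\sppi(f)\,\d\mm$ for $f\in\Lipk$. Since $|\bb_\sppi(f)|\leq\rho\,{\rm lip}_af$ with $\rho\in L^q$ and ${\rm lip}_af\in L^p$ (bounded and supported in a bounded set), we have $\bb_\sppi(f)\in L^1$; approximating the constant $1$ by indicators $\chi_{B_n(x_0)}$ of an increasing family of balls and applying dominated convergence on both sides of \eqref{def:bspi} --- the right-hand integrand being dominated by ${\rm Lip}(f)|\dot\gamma_t|\in L^1(\d t\,\d\ppi)$ and $\chi_{B_n(x_0)}(\gamma_t)\to1$ since each curve has bounded image --- yields, by the fundamental theorem of calculus, $\int_X\bb_\sppi(f)\,\d\mm=\int_{AC}\big(f(\gamma_1)-f(\gamma_0)\big)\,\d\ppi$. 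As $(\e_0)_\sharp\ppi$ and $(\e_1)_\sharp\ppi$ are probability measures bounded by $C(\ppi)\mm$, they have densities $\rho_0,\rho_1\in L^1\cap L^\infty\subseteq L^q$, so the right-hand side equals $\int_X f(\rho_1-\rho_0)\,\d\mm$; since $\int_X f\,\div\bb_\sppi\,\d\mm=-\int_X\bb_\sppi(f)\,\d\mm$ this identifies $\div\bb_\sppi=\rho_0-\rho_1\in L^q$ and gives \eqref{def:bspi2}, completing $\bb_\sppi\in\Der qq$. I expect the main obstacle to be precisely the boundedness step of the first paragraph: it is there that the two defining features of a $q$-plan, finite $q$-energy and the marginal bound $(\e_t)_\sharp\ppi\leq C(\ppi)\mm$, must be combined via H\"older to control the functional on all of $L^p$; a secondary subtlety is that the constant $1$ is not in $L^p$, so the endpoint formula for the divergence must be reached through the integrability $\bb_\sppi(f)\in L^1$ and the exhaustion argument rather than by direct substitution.
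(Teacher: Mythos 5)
Your proposal is correct and follows essentially the same route as the paper: the chain-rule bound $|(f\circ\gamma)'|\leq{\rm lip}_af(\gamma_t)|\dot\gamma_t|$, Fubini, H\"older combined with the marginal bound $(\e_t)_\sharp\ppi\leq C(\ppi)\mm$ to show the barycenter has an $L^q$ density, and the fundamental theorem of calculus for the divergence. You are in fact somewhat more careful than the paper at two points --- invoking Riesz representation explicitly to establish that $\bb_\sppi(f)$ exists, and handling the fact that $g\equiv 1$ need not lie in $L^p$ via an exhaustion by balls --- but these are refinements of the same argument, not a different one.
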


\begin{proof} We first fix $f \in \Lipb$ and notice that the right hand side in \eqref{def:bspi} is well defined thanks to Rademacher theorem. Then the Liebniz rule is easy to check thanks to its validity in the right hand side of \eqref{def:bspi}. In order to find a good candidate for $|\bb_{\sppi}|$, we estimate $\frac {d ( f \circ \gamma )}{ds} \leq {\rm lip}_a (f )( \gamma_t ) |\dot{\gamma_t}|$ and so, for every nonnegative $g \in L^p$ we have
%
$$\int_0^1 g(\gamma_t)  \frac {d ( f \circ \gamma )}{ds} (t) \, \d t \leq \int_0^1 g(\gamma_t) {\rm lip}_a (f) (\gamma_t) | \dot{\gamma}_t| \, \d t;$$ 
integrating with respect to $\ppi$ and using Fubini theorem we get
\begin{equation}\label{eq:cc}
 \int_X g \cdot \bb_{\sppi} (f) \, \d \mm \leq  \int_X g\cdot  {\rm lip}_a (f)  \, \d \mu_{\sppi},
\end{equation}
where $\mu_{\sppi} = \int_0^1 (\e_t)_{\sharp} ( \|\dot{\gamma}_t | \ppi ) \, \d t$ is the \emph{barycenter} of $\ppi$, and it is such that
\begin{equation}\label{eq:mu}
 \int_X g \, \d \mu_{\sppi} = \iint_{\gamma} g \, \d s \, \d \ppi.
 \end{equation}
In particular we can use H\"older's inequality to estimate the behavior of $ \mu_{\sppi}$:
\begin{align*}
 \int_X g \, \d \mu_{\sppi}  & =   \int_{AC} \int_0^1 g(\gamma_t) | \dot{\gamma_t}| \, \d t \, \d \ppi \\
& \leq \left( \iint |g(\gamma_t)|^p \, \d t \d \ppi \right)^{1/p}  \left( \iint |\dot{\gamma_t}|^q \, \d t \d \ppi \right)^{1/q} \\
& \leq  C(\ppi)^{1/p} \cdot \| g\|_{L^p(\mm)} \cdot \| E_q( \gamma) \|_{L^q( \sppi)},
\end{align*}
and so, by duality argument, we obtain that $\mu_{\sppi} = h \mm$ with $h \in L^q(X,\mm)$; using this representation in \eqref{eq:cc} we obtain
$$ \int_X  g \cdot \bb_{\sppi} (f) \, \d \mm \leq \int_X g \cdot {\rm lip}_a (f) h \, \d \mm \qquad \forall \, g\in L^q , \; g \geq 0.$$
So we deduce that $|\bb_{\sppi}|\leq h$ and in particular $\bb_{\sppi} \in L^q$ and \eqref{def:modul} is true thanks to \eqref{eq:mu}.

It remains to prove the last equality: by definition of divergence we have, for $f \in \Lipk$
\begin{equation}\label{eq:verlip}
 \int f \cdot \div \bb_{\sppi} \, \d \mm = \int_{AC} \int_0^1  \frac {d ( f \circ \gamma )}{ds} (t) \, \d t \, \d \ppi (\gamma) = \int ( f(\gamma_1) - f(\gamma_0)) \, \d \ppi,
\end{equation}
thanks to the fact that the fundamental theorem of calculus holds for Lipschitz functions. By definition of $q$-plan we have also that $(\e_t)_{\sharp} \ppi =f_t \mm $ where $f_t \leq C(\ppi)$ for every $t \in [0,1]$; since $\ppi$ is a probability measure we have $\int f_t \, \d \mm=1$ and so $f_t \in L^1 \cap L^{\infty}$ and in particular $f_t \in L^q$ and so $\div \bb_{\sppi}= (f_1 - f_0 ) \in L^q$. This enables us to extend \eqref{eq:verlip} to $f \in L^p$ and so we proved also \eqref{def:bspi2}.
\end{proof} 

%
%
%
%

\begin{lemma} Let $f \in W^{1,p}(X, \sfd, \mm)$. Then $|\nabla f|_{w}$ is a $p$-weak upper gradient for $f$.
\end{lemma}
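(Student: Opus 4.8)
The plan is to turn every $q$-test plan into a derivation by means of Proposition~\ref{prop:pipi} and then to test the definition of $W^{1,p}$ against it, using the pointwise bound of Theorem~\ref{thm:improved}. First I would fix an arbitrary $q$-test plan $\ppi$, form $\bb_{\sppi}\in\Der qq$, and record the following chain, obtained by combining \eqref{eq:defw} (together with the well-posedness remark, which gives $L_f(\bb_{\sppi})=\bb_{\sppi}(f)$), the identity \eqref{def:bspi2}, the inequality \eqref{eqn:improvedth}, and the modulus estimate \eqref{def:modul} applied with $g=|\nabla f|_p\in L^p(\mm)$:
\begin{align*}
\left| \int (f(\gamma_1) - f(\gamma_0)) \, \d\ppi \right|
&= \left| \int_X f \cdot \div \bb_{\sppi} \, \d\mm \right|
= \left| \int_X \bb_{\sppi}(f) \, \d\mm \right|
\leq \int_X |\bb_{\sppi}(f)| \, \d\mm \\
&\leq \int_X |\nabla f|_p \, |\bb_{\sppi}| \, \d\mm
\leq \iint_{\gamma} |\nabla f|_p \, \d s \, \d\ppi .
\end{align*}
This is the integrated (plan-averaged) upper gradient inequality, valid for \emph{every} $q$-test plan; note that the absolute value on the left comes for free, since it already surrounds $\int_X \bb_{\sppi}(f)\,\d\mm$, so there is no need to split into signs at this stage.

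Before upgrading, I would settle the finiteness requirement built into \eqref{eq:inweak}. For any $q$-test plan one has $\iint_{\gamma}|\nabla f|_p\,\d s\,\d\ppi\leq\int_X|\nabla f|_p\,\d\mu_{\sppi}<\infty$, because the barycenter $\mu_{\sppi}$ admits the representation $\mu_{\sppi}=h\mm$ with $h\in L^q(X,\mm)$ established in the proof of Proposition~\ref{prop:pipi}, and $|\nabla f|_p\in L^p$, so H\"older applies. Hence $\int_{\gamma}|\nabla f|_p<\infty$ for $\ppi$-a.e.\ $\gamma$, and since this holds for all $q$-test plans, it holds for $p$-a.e.\ $\gamma$. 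I would also fix once and for all a Borel representative of $f$ so that $\gamma\mapsto f(\gamma_1)-f(\gamma_0)$ is meaningful; because $(\e_t)_\sharp\ppi\ll\mm$ the choice is immaterial $\ppi$-a.e.\ for each plan, which is exactly what the argument uses.

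The second and decisive step is to promote the integral bound to the pointwise statement $|f(\gamma_1)-f(\gamma_0)|\leq\int_{\gamma}|\nabla f|_p$ for $p$-a.e.\ $\gamma$, which is the definition of $|\nabla f|_p$ being a $p$-weak upper gradient (and thus yields $W^{1,p}\subseteq W^{1,p}_w$ with $|\nabla f|_{p,w}\leq|\nabla f|_p$). I would argue by contradiction: if the set $B=\{\gamma:\ |f(\gamma_1)-f(\gamma_0)|>\int_{\gamma}|\nabla f|_p\}$ were not $p$-negligible, some $q$-test plan $\ppi$ would satisfy $\ppi(B)>0$. Splitting $B$ by the sign of $f(\gamma_1)-f(\gamma_0)$ (the equality case gives left-hand side $0$ and is excluded) and restricting $\ppi$ to the piece of positive mass, then renormalizing, produces again a $q$-test plan: restriction to a Borel set of curves only decreases the marginals, so the compression bound \eqref{eq:1} and the finiteness of the $q$-energy are preserved. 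Applying the integrated inequality to this restricted plan, on whose support $|f(\gamma_1)-f(\gamma_0)|>\int_{\gamma}|\nabla f|_p$ holds, gives a strict contradiction (the absolute value on the left of the integrated bound covers both signs equally). The main obstacle is precisely this passage from the plan-averaged inequality to the curve-wise almost-everywhere one; everything hinges on the stability of the class of $q$-test plans under restriction, which is routine, so once the integrated inequality above is in hand the conclusion follows by this standard localization.
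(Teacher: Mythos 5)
Your proposal is correct and follows essentially the same route as the paper: build $\bb_{\sppi}$ from a $q$-test plan via Proposition~\ref{prop:pipi}, combine \eqref{def:bspi2}, \eqref{eqn:improvedth} and \eqref{def:modul} into the plan-averaged inequality, and then localize using the stability of $q$-test plans under restriction and renormalization. The only (immaterial) difference is that the paper restricts to an arbitrary Borel set of curves and reads off the pointwise inequality directly (treating $f$ and $-f$ separately), whereas you restrict to the sign-split bad set and conclude by contradiction; your added remarks on finiteness and the choice of Borel representative are sound.
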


\begin{proof}
By Proposition \ref{prop:pipi} we know that for every $q$-plan $\ppi$ we can associate a derivation $\bb_{\sppi} \in \Der qq$; we use this derivation in the definition of $W^{1,p}$ and, using also Theorem \ref{thm:improved}, we obtain
$$- \int_X f \cdot \div \bb_{\sppi} \, \d \mm  \leq \int | \nabla f|_w  \cdot |\bb_{\sppi}| \, \d \mm;$$
Now, using \eqref{def:modul} and \eqref{def:bspi2}, we obtain precisely

\begin{equation}
\label{eqn:almost}
\int_{AC} (f(\gamma_0) - f(\gamma_1)) \d \ppi \leq \int_{AC} \int_\gamma |\nabla f|_w\, \d s \,\d \ppi. \qquad \forall \ppi \text{ $q$-plan}
\end{equation}

We can "localize" this inequality using the fact that for every Borel set $A \subseteq C([0,1];X)$ such that $\ppi(A) \neq 0$, we have that $\ppi_A=\frac 1{\sppi(A)} \ppi|_A$ is still a $q$-plan and so we can infer that 
\begin{equation}
\label{eqn:almost}
\int_{A} (f(\gamma_0) - f(\gamma_1)) \d \ppi \leq \int_{A} \int_\gamma |\nabla f|_w \, \d s\,\d \ppi. \qquad \forall A \subset C([0,1];X),
\end{equation}
and so $f(\gamma_0) - f(\gamma_1) \leq  \int_{\gamma} | \nabla f |_w$ for $\ppi$-almost every curve. Applying the same conclusion to $-f$ we get that the upper gradient property is true for $\ppi$-almost every curve. Since $\ppi$ was an arbitrary  $q$-plan, by definition we have
$$ |f(\gamma_0) - f(\gamma_1)| \leq  \int_{\gamma} | \nabla f |_w \, \d s \qquad \text{ for $p$-almost every curve } \gamma$$
and so $|\nabla f|_w$ is a $p$-weak upper gradient.
\end{proof}

\newpage

\section{$BV$ space via derivations}

From now on, when $\mu \in \mathcal{M}(X)$, we will denote $\int_X \, \d \mu = \mu(X)$.

\begin{definition}\label{def:deri}  Let $f \in L^1(X, \sfd,\mm)$; we say $f \in BV(X, \sfd, \mm)$ if there exists a linear map $L_f: \Derri \to \mathcal{M}(X)$ satisfying
\begin{equation}\label{eq:defw}
\int_X \, \d L_f(\bb) =-\int_X f \div \bb\,d\mm\qquad \forall \text{ $\bb\in \Deri$,}
\end{equation}
continuous with respect to the $\Derr \infty$ norm and such that $L_{f}(h \bb)=hL_f(\bb)$ for every $h \in C_b(X), \bb \in \Derri$.
\end{definition}

As in the $W^{1,p}$ case, we can prove that $L_f(\bb)$ is uniquely defined whenever $f \in BV$ and $\bb \in \Deri$:

\begin{remark}[Well posedness in $\Deri$] Let us fix $\bb \in \Deri$, $f \in BV$; let $L_f$ and $\tilde{L}_f$ be two different linear maps given in the definition on $BV$. Let $h \in \Lipb$: using Lemma \ref{lem:multi} we have $ h \bb \in \Deri$ and so we can use the $C_b$ linearity and \eqref{eq:defw} to get
$$ \int_X h \, \d L_f(\bb)= \int_X \, \d L_f (h \bb)= -\int_X f \div ( h\bb) \, \d \mm,  $$
and the same is true for $\tilde{L}_f$. In particular $\int_X h \, \d L_f( \bb) = \int_X h \, \d \tilde{L}_f (\bb)$, and thanks to the arbitrariness of $h \in \Lipb $ we conclude that $L_f( \bb) = \tilde{L}_f( \bb)$. We will call this common value $Df (\bb)$.
\end{remark}

Now we can give the definition of total variation:

\begin{theorem}\label{thm:improved1} Let $f \in BV(X, \sfd, \mm)$; then there exists a finite measure $\nu \in \mathcal{M}_+(X)$ such that, for every Borel set $A \subseteq X$,
\begin{equation}\label{eqn:thesis1} \int_A \, \d Df(\bb) \leq \int_A | \bb|^* \, \d \nu  \qquad \forall \bb \in \Deri,
\end{equation}
where $g^*$ denotes the upper semicontinuous envelope of $g$. The least measure that realizes this inequality is denoted with $| Df |$, the weak total variation of $f$. Moreover
\begin{equation}\label{eqn:total}
|Df|(X)= \sup \{ |Df(\bb)(X)| \; : \; |\bb| \leq 1, \bb \in \Deri \}.
\end{equation}
\end{theorem}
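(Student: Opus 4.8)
The plan is to run the $p=1$ argument of Theorem~\ref{thm:improved}, but producing a (possibly $\mm$-singular) measure instead of an $L^1$ density, and exploiting the fact that here $L_f$ is a $C_b(X)$-module map while Lemma~\ref{lem:multi} lets Lipschitz multipliers act inside $\Deri$. First I would record, from the $\Derr\infty$-continuity of $L_f$ demanded in the definition of $BV$, a constant $C=\|L_f\|$ with $\|Df(\bb)\|_{\mathcal M(X)}\le C\,\| |\bb| \|_\infty$ for every $\bb\in\Deri$; in particular each $Df(\cc)$ with $|\cc|\le 1$ is a signed measure of mass at most $C$. I then set $\nu$ to be the supremum, in the sense of measures, of the symmetric family $\mathcal F=\{Df(\cc):\cc\in\Deri,\ |\cc|\le 1\}$, built exactly as the measure $\mu_{\mathcal F}$ in the proof of Theorem~\ref{thm:improved}. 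Being the least upper bound of $\mathcal F$ it satisfies $Df(\cc)\le\nu$ for all such $\cc$, and since $0\in\mathcal F$ we have $\nu\ge 0$.

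Both the finiteness $\nu(X)\le C$ and, later, the mass identity \eqref{eqn:total} rest on a gluing step. Given disjoint Borel sets $A_1,\dots,A_N$ and derivations $\cc_i\in\Deri$ with $|\cc_i|\le 1$, I would shrink each $A_i$ to a compact $K_i$ by inner regularity, separate the $K_i$ by disjoint open sets (possible in a metric space), and choose Lipschitz bumps $\phi_i\in\Lipb$, $0\le\phi_i\le 1$, $\phi_i\equiv 1$ on $K_i$, with pairwise disjoint supports. By Lemma~\ref{lem:multi} each $\phi_i\cc_i\in\Deri$, hence $\cc=\sum_i\phi_i\cc_i\in\Deri$ with $|\cc|\le 1$, and $C_b(X)$-linearity gives $Df(\cc)=\sum_i\phi_i\,Df(\cc_i)$. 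Therefore $\sum_i Df(\cc_i)(A_i)\le Df(\cc)(X)+\eps\le C+\eps$, which yields $\nu(X)\le C<\infty$, so $\nu\in\mathcal M_+(X)$ is finite.

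For a general $\bb\in\Deri$ I cannot simply divide by $|\bb|$ as in the $W^{1,1}$ case, because the multiplier must be Lipschitz in order to keep the rescaled derivation inside $\Deri$; this is exactly where the envelope $|\bb|^*$ enters. For any $h\in\Lipb$ with $h\ge|\bb|^*$ and $h\ge\delta>0$ one has $h^{-1}\in\Lipb$, so $\cc=h^{-1}\bb\in\Deri$ by Lemma~\ref{lem:multi} and $|\cc|\le 1$; consequently $Df(\cc)\le\nu$, and multiplying by the nonnegative $h\in\Lipb$ gives $Df(\bb)=Df(h\cc)=h\,Df(\cc)\le h\,\nu$. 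Since $|\bb|^*$ is upper semicontinuous and bounded, it is the decreasing limit of Lipschitz functions $h\ge|\bb|^*$ (sup-convolutions, after removing the lower truncation $\delta\to 0$), so letting $h\downarrow|\bb|^*$ and using dominated convergence against the finite measure $\nu$ gives $Df(\bb)(A)\le\int_A|\bb|^*\,\d\nu$ for every Borel $A$, which is \eqref{eqn:thesis1}.

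Finally, if $\nu'$ is any measure satisfying \eqref{eqn:thesis1} then for $|\cc|\le 1$ we have $|\cc|^*\le 1$, whence $Df(\cc)\le|\cc|^*\nu'\le\nu'$; thus $\nu'$ dominates $\mathcal F$ and $\nu\le\nu'$ by the least-upper-bound property, so $\nu$ is minimal and we set $|Df|:=\nu$. For \eqref{eqn:total}, the bound $|Df(\cc)(X)|\le\nu(X)$ for $|\cc|\le 1$ (applying $Df(\cc)\le\nu$ to $\pm\cc$) gives one inequality, while the gluing step of the second paragraph shows that every partition sum defining $\nu(X)$ is within $\eps$ of a single $Df(\cc)(X)$ with $|\cc|\le 1$, giving the reverse. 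I expect the main obstacle to be precisely the interplay between the $C_b(X)$-module structure and the envelope $|\bb|^*$: one must ensure that every rescaled derivation stays in the domain $\Deri$ (this is why Lipschitz, rather than merely continuous, multipliers are used and Lemma~\ref{lem:multi} is invoked), and that the approximation of $|\bb|^*$ from above by Lipschitz functions is compatible with the possibly $\mm$-singular measure $\nu$.
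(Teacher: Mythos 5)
Your argument is correct in substance but follows a genuinely different route from the paper's. The paper fixes a compact set $K$, runs a Hahn--Banach separation between the concave functional $\Psi_1$ and $\Psi_2(h)=C\|h\|_\infty$ on $C_b(K)$, represents the separating functional by a measure $\mu_K$, extracts the minimal element $\nu_K$ of the lattice $A_K$ of dominating measures, checks consistency of the $\nu_K$ on overlaps, and glues them by inner regularity; \eqref{eqn:thesis1} then follows by taking the infimum over continuous $h\ge|\bb|$ in $\int_A \d L_f(\bb)\le\int_A h\,\d\nu_K$. You instead transplant the supremum-of-measures device from the compact $p=1$ case of Theorem~\ref{thm:improved}: you define $\nu$ directly as the least upper bound of the symmetric family $\{Df(\cc):\cc\in\Deri,\ |\cc|\le1\}$, prove finiteness by gluing disjointly supported Lipschitz bumps, and obtain \eqref{eqn:thesis1} by rescaling $\bb$ with $h^{-1}$ for Lipschitz $h\ge\max\{|\bb|^*,\delta\}$, precisely where the paper invokes Hahn--Banach. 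This avoids Hahn--Banach and the per-compact consistency check, and it yields the minimality of $\nu$ and the identity \eqref{eqn:total} essentially for free, whereas the paper gets minimality from the lattice structure of $A_K$. Two points you should make explicit, though neither is a real gap: (a) in the gluing step the bumps $\phi_i$ must be supported in open sets $U_i\supseteq K_i$ chosen, by outer regularity of the finite measures $|Df(\cc_i)|$, with $|Df(\cc_i)|(U_i\setminus K_i)<\eps/N$, since otherwise $\int_X\phi_i\,\d Df(\cc_i)$ could undershoot $Df(\cc_i)(K_i)$ by a negative contribution that is not controlled; (b) the inequality $|\bb|^*\ge|\bb|$ $\mm$-a.e.\ (needed so that $|h^{-1}\bb|\le1$ a.e.) deserves a short Lindel\"of argument when $|\bb|^*$ is defined as the pointwise infimum of the uncountable family of continuous $h\ge|\bb|$ $\mm$-a.e. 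Both are routine and at the same level of detail that the paper itself leaves implicit.
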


\begin{proof} We argue similarly to Theorem \ref{thm:improved}: by hypothesis we have that $f \in BV$ and so there exists a $C_b$-linear map $L_f: \Deri \to \mathcal{M}(X)$ such that $L_f(\bb) (X)  \leq C \| \bb \|_{L^{\infty}}$, where we can take $C=\sup \{ |Df(\bb)(X)| \; : \; |\bb| \leq 1, \bb \in \Deri \}$. Note that if $|\bb| \leq h$ where $h \in C_b(X)$ then we have that
\begin{equation}\label{eqn:compact}\int_K \, \d L_f (\bb) \leq C \sup_{x \in K} h(x) \qquad \forall K \subseteq X \text{ compact;}
\end{equation}
in fact, denoting with $\rho_{n} = \min \{ 1- n\sfd(x,K) \}$, we have that $\rho_n \to \chi_K$ pointwise and $0 \leq \rho_n \leq 1$ so, by dominated convergence theorem, 
$$ \int_K \, \d L_f(\bb) = \lim_{n \to \infty} \int_X \, \rho_n \, \d L_f(\bb) \leq C \lim_{n \to \infty} \| \rho_n \bb\|_{\infty} \leq C\lim_{n} \sup_{x \in X}  \rho_n (x) h (x) = C \sup_{x \in K} h(x),$$
where the last equality holds thanks to the compactness of $K$. Now, for every compact set $K \subseteq X$ and consider two functionals in the Banach space $Y=C_b(K)$:
\begin{equation}\label{eqn:psi11}
 \Psi_2 ( h) = C \| h \|_{\infty}
\end{equation}
\begin{equation}\label{eqn:psi21}
 \Psi_1 ( h) = \sup \left\{ \int_K \, \d L_f(\bb)  : \bb \in \Derri,  \; \text{ $ \exists \, \tilde{h} \in C_b(X)$ such that $| \bb| \leq \tilde{h}$, $\tilde{h}|_{K} \leq h$} \right\}
\end{equation}
where the supremum of the empty set is meant to be $- \infty$. Equation \eqref{eqn:compact} guarantees that
\begin{equation}\label{eqn:ovvio21}
 \Psi_1( h) \leq \Psi_2 (h) \qquad \forall h \in Y.
\end{equation}
Moreover, as before, $\Psi_2$ is convex and continuous while $\Psi_2$ is concave; by Hahn-Banach theorem we can find a continuous linear functional $L$ on $C_b(K)$ such that
$$ \Psi_1(h) \leq L(h) \leq \Psi_2(h). $$
In particular there exists a measure $\mu_K$ such that $L(h) = \int_{K} h \, \d \mu_K$ and, thanks to \eqref{eqn:psi11}, we have $\mu_K(K) \leq C$. Moreover, thanks to \eqref{eqn:psi21} we have that if $h \in C_b(X)$ is such that $|\bb|\leq h$ for some $\bb \in \Derri$ then
$$ \int_K \, \d L_f( \bb) \leq \int_K  h \, \d \mu_K;$$
since for every $k \in C_b(X)$, we have $|k\bb| \leq |k|h$ we obtain also
$$\int_K k \, \d L_f(\bb) \leq \int_K |k| h \, \d \mu_K.$$
In particular, optimizing in $k$ we obtain also that $|L_f(\bb)|$, the total variation of $L_f(\bb)$, restricted to $K$, is less then or equal to $h \mu_K$. This implies that the following set is nonempty:
$$ A_K =  \left\{ \nu \in \mathcal{M}_+(K) \; : \; |L_f(\bb)||_K \leq h \nu  \text{ whenever }\bb \in \Derri , h \in C_b(X) \text{ s.t. } | \bb| \leq h \right\}. $$
Clearly this set is convex, weakly-$*$ closed and a lattice, in particular there exists the minimum, that we call $\nu_K$. We can drop the dependence on $K$ since it is easy to see that if $A \subset K_1 \cap K_2$ then $\nu_{K_1} (A) =\nu_{K_2} (A)$; suppose on the contrary that $\nu_{K_1}(A) > \nu_{K_2}(A)$. Then we can consider the measure $\tilde{\nu} (B)= \nu_{K_1}(B \setminus A)  + \nu_{K_2} ( B \cap A)$ that would be a strictly better competitor than $\mu_{K_1}$ in $A_{K_1}$.

Now we can extend $\nu$ to a measure on the whole space
$$\nu (B) = \sup_{ K \subseteq B} \nu (K)  \qquad \forall B \subseteq X \text{ Borel}; $$
this is easily seen to be a measure, that is also finite since $\nu(K) \leq \mu_K(K) \leq C$ for all $K$ compact and in particular we get $\nu(X) \leq C$. Thanks to the finiteness of $|L_f(\bb)|$ and $\nu$, using that $\nu|_K \in A_K$, we find that
$$ |L_f( \bb)| \leq h \nu \text{ whenever }\bb \in \Derri , h \in C_b(X) \text{ s.t. } | \bb| \leq h,$$
in particular, integrating in $A$ we get 
$$\int_A \, \d L_f(\bb) \leq \int_A h \, \d \nu, $$
and taking the infimum in $h$ we obtain \eqref{eqn:thesis1}, recalling that if $g \in L^{\infty}$ then
$$ g^*(x)= \inf \{ h(x) \; : \; h \in C_b(X), \; \; h \geq g  \; \mm \text{-a.e.}\}. $$
For the last assertion we already proved $C \geq \nu(X)$, while the other inequality is trivial taking $A=X$ in \eqref{eqn:thesis1}.
\end{proof}

\begin{theorem}[Representation formula for $|Df|$] Let $f \in BV$. Then the classical representation formula holds true: for every open set $A$
\begin{equation}\label{eqn:reform}
|Df|(A) = \sup \left\{ \int_A f \cdot \div (\bb) \, \d \mm \; : \; \bb \in \Deri , \; |\bb| \leq 1, \; {\rm supp} (\bb) \Subset A \right\},
\end{equation}
where $B \Subset A$ if $\sfd ( X \setminus A, B ) >0$ and $B$ is bounded.
\end{theorem}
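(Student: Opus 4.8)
The plan is to reduce \eqref{eqn:reform} to a comparison of two measures and then exploit the minimality built into Theorem~\ref{thm:improved1}. First I would rewrite the right–hand side in a more convenient form. If $\supp(\bb)\Subset A$ then, since $\bb(f)\equiv 0$ whenever $\supp(f)$ avoids $\supp(\bb)$, the divergence $\div\bb$ is concentrated on $\supp(\bb)\subseteq A$; hence $\int_A f\,\div\bb\,\d\mm=\int_X f\,\div\bb\,\d\mm=-Df(\bb)(X)$. As the admissible class $\{\bb\in\Deri:\ |\bb|\le 1,\ \supp\bb\Subset A\}$ is symmetric under $\bb\mapsto-\bb$, the supremum in \eqref{eqn:reform} equals $\lambda(A):=\sup\{|Df(\bb)(X)|:\ |\bb|\le1,\ \supp\bb\Subset A\}$. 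Moreover, applying the $C_b$–linearity $L_f(\psi\bb)=\psi L_f(\bb)$ to $\psi$ supported off $\supp\bb$ shows that $Df(\bb)$ is concentrated on $A$, so $Df(\bb)(X)=Df(\bb)(A)$.

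The easy inequality $\lambda(A)\le|Df|(A)$ is then immediate from the construction in Theorem~\ref{thm:improved1}, which exhibits $|Df|$ as the \emph{least} measure satisfying $|L_f(\bb)|\le h\,|Df|$ whenever $|\bb|\le h\in C_b$. Indeed, taking $h\equiv1$ gives $|Df(\bb)(A)|\le|L_f(\bb)|(A)\le|Df|(A)$ for every admissible $\bb$, and passing to the supremum yields $\lambda(A)\le|Df|(A)$.

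The substance is the reverse inequality $|Df|(A)\le\lambda(A)$. I would call a measure $\rho\in\mathcal{M}_+(X)$ concentrated on $A$ an \emph{$A$-competitor} if $|L_f(\bb)|\le h\rho$ for all $\bb$ with $\supp\bb\Subset A$ and $|\bb|\le h\in C_b$. Rerunning the Hahn--Banach and lattice argument of Theorem~\ref{thm:improved1} over compacts $K\subseteq A$, but testing only against derivations with support $\Subset A$, produces a minimal $A$-competitor $\nu^A$; the localized analogue of \eqref{eqn:total} gives $\nu^A(A)=\lambda(A)$, the nontrivial bound $\nu^A(A)\le\lambda(A)$ coming from the constant in the Hahn--Banach step and the reverse from $|Df(\bb)(A)|\le\nu^A(A)$ via \eqref{eqn:thesis1}. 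It then suffices to prove that every $A$-competitor $\rho$ dominates $|Df|$ on $A$, since applying this to $\rho=\nu^A$ yields $|Df|(A)\le\nu^A(A)=\lambda(A)$ and closes the argument.

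The key gluing step is the following: set $\tilde\rho(E):=\rho(E\cap A)+|Df|(E\setminus A)$ and check that $\tilde\rho$ is a \emph{global} competitor. Off $A$ this is just the competitor property of $|Df|$. On $A$, fix an arbitrary $\bb$ with $|\bb|\le h\in C_b$ and a compact $K\subseteq A$, and choose $\psi\in\Lipb$ with $0\le\psi\le1$, $\psi\equiv1$ on $K$ and $\supp\psi\Subset A$; then $\psi\bb$ has support $\Subset A$ and $|\psi\bb|\le h$, while $L_f(\psi\bb)=\psi L_f(\bb)$ forces $|L_f(\bb)|$ and $|L_f(\psi\bb)|$ to agree on $K$, so the $A$-competitor property gives $|L_f(\bb)|(K)=|L_f(\psi\bb)|(K)\le\int_K h\,\d\rho$. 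Letting $K$ exhaust $E\cap A$ by inner regularity yields $|L_f(\bb)|(E\cap A)\le\int_{E\cap A}h\,\d\rho$, and summing the two contributions shows $|L_f(\bb)|\le h\tilde\rho$. Minimality of $|Df|$ then forces $|Df|\le\tilde\rho$, i.e. $|Df|\res A\le\rho$. I expect this gluing/locality argument, together with the bookkeeping needed to make the localized construction of $\nu^A$ on the non-compact set $A$ rigorous, to be the main obstacle; once both are in place, the two inequalities combine to give $|Df|(A)=\lambda(A)$, which is precisely \eqref{eqn:reform}. (Note this route avoids any differentiation/covering theorem, which in a general metric space would be unavailable.)
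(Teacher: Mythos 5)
Your argument is correct in substance and rests on the same two pillars as the paper's proof --- a localized total variation whose total mass equals the supremum on the right-hand side of \eqref{eqn:reform}, and a gluing-plus-minimality step identifying it with $|Df|$ on $A$ --- but it realizes the localization differently. The paper sandwiches $A_1\Subset C\Subset A_2$ with $C$ closed, regards $(C,\sfd,\mm)$ as a metric measure space in its own right, and obtains the localized object as $|Df|_C$ by simply invoking Theorem~\ref{thm:improved1} and \eqref{eqn:total} on that subspace; its gluing measure $\nu(B)=|Df|_X(B\setminus A_1)+|Df|_C(B\cap A_1)$ plays exactly the role of your $\tilde\rho$, and the proof finishes by letting $A_1\up A_2$. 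You instead keep the ambient space fixed and shrink the class of test derivations to those with support $\Subset A$, re-running the Hahn--Banach/lattice construction to produce the minimal $A$-competitor $\nu^A$. Your route works directly with $A$ (no intermediate sets, no limit $A_1\up A_2$) and isolates the gluing step as a clean statement about arbitrary $A$-competitors; the price is that the whole machinery of Theorem~\ref{thm:improved1} --- the bound \eqref{eqn:compact}, the concavity of $\Psi_1$, the lattice argument over compacts $K\subseteq A$, and the extension to the non-compact open set $A$ --- must be re-verified under the support constraint, which you only assert. That re-verification does go through (the constraint is stable under sums and under multiplication by the cutoffs $\rho_n$ used in \eqref{eqn:compact}), so I see no gap, only deferred bookkeeping. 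Your preliminary observations --- that $\div\bb$ vanishes $\mm$-a.e.\ off $\supp\bb$, so the integral over $A$ equals the integral over $X$, and that $L_f(\bb)$ is concentrated on $\supp\bb$ by $C_b$-linearity --- are needed, correct, and implicitly used in the paper as well.
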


\begin{proof} Let us consider two open sets $A_1, A_2$ and a closed set $C$ such that $A_1 \Subset C \Subset A_2$. We will consider $(C, \sfd, \mm)$ as a separable metric measure space, and relate the definitions of bounded variation in $X$ and $C$. Let us consider a function $f \in BV(X, \sfd,\mm)$; it is clear that $f \in BV(C, \sfd,\mm)$ since $\Deri (C) \subset \Deri (X)$ (it is sufficient to set $\bb_X(f)=\bb_C( f|_C)$), and consequently $|Df|_X \geq |Df|_C$ by \eqref{eqn:thesis1}.

Moreover $|Df|_X(A_1)=|Df|_C(A_1)$. This is true because there exists a Lipschitz function $0 \leq \chi \leq 1$ such that $\chi=0$ in $X \setminus C$ and $\chi=1$ on a neighborhood of $A_1$; then we have that if $\bb \in \Deri(X)$ implies that $\chi \bb \in \Deri(C)$ and so in \eqref{eqn:thesis1} we can imagine that $ \bb \in \Deri(C)$ whenever $A \subseteq A_1$; but then we get that the measure $\nu$ defined as
$$\nu(B)=|Df|_X(B \setminus A_1)+|Df|_C (B \cap A_1)$$
is a good candidate in \eqref{eqn:thesis1} and so, by the minimality  of $|Df|_X$ we get $|Df|_C(A_1)=|Df|_X(A_1)$.

Now, denoting by $\mu(A)$ the set function defined in the left hand side of \eqref{eqn:reform}, it is obvious that $\mu(A_2) \leq |Df|(A_2)$. But it is also obvious that $\mu (A_2) \geq |Df|_C(C) \geq |Df|_C(A_1)=|Df|_X(A_1)$. Letting $A_1 \up A_2$ we get the desired inequality.
\end{proof}

\subsection{Equivalence of $BV$ spaces}\label{sec:eq1}

We just sketch the equivalence with the other definitions given in literature: in particular we refer to \cite{ADM}, where the authors consider the spaces $BV_*$ and $w-BV$ and show their equivalence. As we did for $W^{1,p}$ we show $BV_* \subseteq BV \subseteq w-BV$.

\begin{lemma}\label{lem:1eq} Let $f \in BV_*(X, \sfd, \mm)$. Then we have $f \in BV(X, \sfd, \mm)$ and $|Df| \leq |Df|_*$ as measures.
\end{lemma}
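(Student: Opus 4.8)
The plan is to mirror the argument for $W^{1,p}_*\subseteq W^{1,p}$ in Subsection~\ref{ss:1eq}, replacing the strong $L^p$ convergence of the approximating slopes by weak-$*$ convergence of measures; this is exactly what forces the upper semicontinuous envelope $|\bb|^*$ to appear, as in Theorem~\ref{thm:improved1}. By the definition of $BV_*$ (see \cite{ADM}) I may fix a recovery sequence $(f_n)\subset\Lipk$ with $f_n\to f$ in $L^1(X,\mm)$ and ${\rm lip}_a(f_n)\,\mm\rightharpoonup^*|Df|_*$ weakly-$*$ in $\mathcal M_+(X)$, with convergence of the total masses; since $|Df|_*$ is finite, this makes the family $\mu_n:={\rm lip}_a(f_n)\,\mm$ tight. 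The two things to produce are the linear map $L_f$ witnessing $f\in BV$, and the measure bound that yields $|Df|\le|Df|_*$.

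First I would establish the basic measure inequality. Fix $\bb\in\Deri$. Weak locality gives $|\bb(f_n)|\le|\bb|\,{\rm lip}_a(f_n)\le|\bb|^*\,{\rm lip}_a(f_n)$ $\mm$-a.e., so by the definition of divergence (and $f_n\in\Lipk$),
\begin{equation*}
\left|\int_X f_n\,\div\bb\,\d\mm\right|=\left|\int_X\bb(f_n)\,\d\mm\right|\le\int_X|\bb|^*\,\d\mu_n .
\end{equation*}
On the left, $f_n\to f$ in $L^1$ and $\div\bb\in L^\infty$, so the left side tends to $\bigl|\int_X f\,\div\bb\,\d\mm\bigr|$. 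On the right, $|\bb|^*$ is bounded and upper semicontinuous, so by tightness and the portmanteau inequality $\limsup_n\int_X|\bb|^*\,\d\mu_n\le\int_X|\bb|^*\,\d|Df|_*$. Hence
\begin{equation}\label{eq:bvkey}
\left|\int_X f\,\div\bb\,\d\mm\right|\le\int_X|\bb|^*\,\d|Df|_*\qquad\forall\bb\in\Deri .
\end{equation}
This is the one genuinely new step, and it is where the argument departs from the Sobolev case: because $|\bb|$ is only bounded measurable, one cannot test the weakly-$*$ convergent measures $\mu_n$ against it directly, and passing instead to the envelope $|\bb|^*$ (together with tightness, to rule out loss of mass in the non-compact case) is exactly what makes the limit legitimate. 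I expect this to be the main obstacle.

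Next I would build $L_f$. For $\bb\in\Deri$ set $R^{\sbb}(h)=-\int_X f\,\div(h\bb)\,\d\mm$ for $h\in C_b(X)$; here $h\bb\in\Deri$ by Lemma~\ref{lem:multi}, and applying \eqref{eq:bvkey} to $h\bb$, together with $|h\bb|=|h|\,|\bb|$ and the identity $(|h|\,|\bb|)^*=|h|\,|\bb|^*$ (valid since $|h|$ is continuous), gives $|R^{\sbb}(h)|\le\int_X|h|\,|\bb|^*\,\d|Df|_*$. Thus $R^{\sbb}$ is dominated by the finite measure $\mu_{\sbb}:=|\bb|^*\,|Df|_*$, hence extends to $L^1(\mu_{\sbb})$ and is represented by a density of modulus $\le1$: there is $L_f(\bb)\in\mathcal M(X)$ with $R^{\sbb}(h)=\int_X h\,\d L_f(\bb)$ and $|L_f(\bb)|\le|\bb|^*\,|Df|_*$. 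Taking $h\equiv1$ recovers the defining integration-by-parts identity for $BV$; the relation $R^{k\sbb}(h)=R^{\sbb}(kh)$ for $k\in C_b(X)$ gives the $C_b$-linearity $L_f(k\bb)=k\,L_f(\bb)$; and $\|L_f(\bb)\|_{\mathcal M}\le\int_X|\bb|^*\,\d|Df|_*\le\||\bb|\|_\infty\,|Df|_*(X)$ gives continuity in the $\Derr\infty$ norm. This defines $L_f$ on $\Deri$, and this last estimate lets me extend it, by continuity in the $\Derr\infty$ norm, to a $C_b$-linear map on all of $\Derri$, so that $f\in BV(X,\sfd,\mm)$.

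Finally, writing $Df(\bb)=L_f(\bb)$, the bound $|L_f(\bb)|\le|\bb|^*\,|Df|_*$ says precisely that $\int_A\,\d Df(\bb)\le\int_A|\bb|^*\,\d|Df|_*$ for every Borel set $A\subseteq X$ and every $\bb\in\Deri$. Thus $|Df|_*$ is an admissible competitor in \eqref{eqn:thesis1}, and by the minimality of $|Df|$ asserted in Theorem~\ref{thm:improved1} we conclude $|Df|\le|Df|_*$ as measures, which is the thesis.
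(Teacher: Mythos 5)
Your proof follows essentially the same route as the paper: the same recovery sequence and the same weak-$*$ limit passage yielding $\bigl|\int_X f\,\div\bb\,\d\mm\bigr|\le\int_X|\bb|^*\,\d|Df|_*$ for all $\bb\in\Deri$, followed by the construction of $L_f$ through the functionals $R^{\sbb}$ exactly as in Subsection~\ref{ss:1eq}, which is precisely what the paper does (and explicitly defers to). You in fact supply more detail than the paper, which omits the construction of $L_f$ altogether; the one point you treat no more carefully than the paper does is how $L_f$ is defined on all of $\Derri$ rather than only on $\Deri$ (density of $\Deri$ in the $\Derr\infty$ norm is not obvious), but this is left implicit in the original as well.
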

\begin{proof}
By hypothesis, we know that there is a sequence $(f_n) \subset \Lipk$ such that ${ \rm lip}_a (f_n) \rightharpoonup |Df|_* $ in duality with $C_b(X)$; in particular, for every $\bb \in \Deri$ we have
$$ \left| \int_X f_n \cdot \div \bb \, \d \mm \right| = \left| \int_X \bb ( f_n) \, \d \mm \right | \leq \int_X | \bb| \cdot {\rm lip}_a (f_n) \, \d \mm$$
taking limits and recalling that whenever $\nu_n \rightharpoonup \nu$ and $g \geq 0$, we have $\liminf_{n \to \infty} \int_X g \, \d \mu_n \leq \int_X g^* \, \d \mu$, we have that
$$\left| \int_X f \cdot \div \bb \right | \leq \int_X | \bb |^* \, \d |Df|_* \qquad \forall \bb \in \Deri.$$
Now this inequality would guarantee that $|Df| \leq |Df|_*$ once we construct the linear functional $L_f: \Deri \to \mathcal{M}(X)$ 
In order to find $L_f(\bb)$ we proceed exactly as in Subsection \ref{ss:1eq}, and so we omit the construction.
\end{proof}

\begin{lemma}\label{lem:2eq} Let $f \in BV(X, \sfd, \mm)$. Then we have $f \in w-BV(X, \sfd, \mm)$ and $|Df|_w(X) \leq |Df|(X)$.
\end{lemma}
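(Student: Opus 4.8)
The plan is to mirror the argument for $W^{1,p}\subseteq W^{1,p}_w$ of Subsection~\ref{ss:2eq}, replacing $q$-plans by $\infty$-test plans and the reference measure $\mm$ by the (possibly singular) measure $|Df|$. First I would record the $\infty$-analogue of Proposition~\ref{prop:pipi}: to every $\infty$-test plan $\ppi$ (concentrated on uniformly Lipschitz curves, say with speed bounded by $L$, and satisfying $(\e_t)_\sharp\ppi\le C\mm$) one associates the derivation $\bb_{\sppi}$ defined by \eqref{def:bspi}. The bounded compression together with the bounded speed force the barycenter $\mu_{\sppi}$ to satisfy $\mu_{\sppi}\le LC\,\mm$, so that $|\bb_{\sppi}|\in L^\infty$ by \eqref{def:modul}; likewise $\div\bb_{\sppi}=(\e_1)_\sharp\ppi-(\e_0)_\sharp\ppi$ has an $L^\infty$ density by the same compression bound. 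Hence $\bb_{\sppi}\in\Deri$, which is exactly the class for which the $BV$ definition and Theorem~\ref{thm:improved1} apply.

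Next I would feed $\bb_{\sppi}$ into the defining relation of $BV$ and invoke Theorem~\ref{thm:improved1} with $A=X$. Combining the defining identity with the $\infty$-version of \eqref{def:bspi2} one obtains
\[
\int_{AC}\bigl(f(\gamma_0)-f(\gamma_1)\bigr)\,\d\ppi
=\int_X\,\d Df(\bb_{\sppi})
\le\int_X|\bb_{\sppi}|^*\,\d|Df|.
\]
Exactly as in the Sobolev case, the same computation applied to the restricted and renormalized plans $\ppi_A=\ppi|_A/\ppi(A)$, which are again $\infty$-test plans for every Borel $A\subseteq C([0,1];X)$ with $\ppi(A)\neq0$, localizes this to an inequality on each $A$; replacing $f$ by $-f$ then removes the sign. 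Thus the whole statement reduces to comparing the right-hand side $\int_X|\bb_{\sppi}|^*\,\d|Df|$ with the integral of $|Df|$ along the curves of the plan, i.e.\ with the quantity appearing in the weak upper gradient inequality defining $w$-$BV$ in \cite{ADM}.

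I expect this last comparison to be the main obstacle. In the Sobolev setting \eqref{def:modul} gave $\int_X g\,|\bb_{\sppi}|\,\d\mm\le\iint_\gamma g\,\d s\,\d\ppi$ for $g\in L^p$, and one simply chose $g=|\nabla f|_w$; this worked because everything was tested against $\mm$, with respect to which $\mu_{\sppi}$ is absolutely continuous. Here one must integrate against $|Df|$, which may be singular with respect to $\mm$, and this is precisely why Theorem~\ref{thm:improved1} carries the upper semicontinuous envelope $|\bb_{\sppi}|^*$ rather than $|\bb_{\sppi}|$ itself. The key point to establish is therefore an envelope version of \eqref{def:modul}, controlling $\int_X|\bb_{\sppi}|^*\,\d\mu$ by the trace of the plan $\iint_0^1|\dot\gamma_t|\,\d t\,\d\ppi$ for an \emph{arbitrary} finite measure $\mu$; this forces one to use the continuity of the curves and the uniform speed bound to pass from the $\mm$-a.e.\ estimate on $|\bb_{\sppi}|$ to a genuinely pointwise upper control of $|\bb_{\sppi}|^*$ against $\mu$. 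Granting this comparison for $\mu=|Df|$, the localized inequalities above yield that $|Df|$ is a weak upper gradient measure for $f$ in the sense of \cite{ADM}; hence $f\in w$-$BV$ with $|Df|_w\le|Df|$ as measures, and in particular $|Df|_w(X)\le|Df|(X)$, as claimed.
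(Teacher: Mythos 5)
Your construction of $\bb_{\sppi}\in\Deri$ from an $\infty$-test plan and the resulting inequality $\int_{AC}(f(\gamma_0)-f(\gamma_1))\,\d\ppi\le\int_X|\bb_{\sppi}|^*\,\d|Df|$ (via the definition of $BV$ and Theorem~\ref{thm:improved1}) match the paper exactly. The divergence, and the genuine gap, is in what you do next: you try to import the Sobolev strategy wholesale, localizing over subsets of curves and then comparing $\int_X|\bb_{\sppi}|^*\,\d|Df|$ with a curvewise integral of $|Df|$, a step you defer as an ``envelope version of \eqref{def:modul} for an arbitrary finite measure $\mu$''. This deferred step is not a technicality that can be granted: since $|Df|$ may be singular with respect to $\mm$ while every marginal $(\e_t)_\sharp\ppi$ of a test plan is absolutely continuous with respect to $\mm$, the curves of $\ppi$ carry no information about the singular part of $|Df|$, and the quantity $\int_\gamma\,\d|Df|$ that would have to appear on the right-hand side of a curvewise inequality is not even well defined. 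So the comparison you identify as ``the main obstacle'' is indeed the obstacle, it is not supplied anywhere, and it is not the right statement to aim at.

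The paper's actual route is much cruder and sidesteps this entirely. From \eqref{def:modul} and the bounded-compression and bounded-speed properties of an $\infty$-plan one gets $|\bb_{\sppi}|\le C(\ppi)\,\|{\rm Lip}(\gamma)\|_{L^\infty(\sppi)}$ $\mm$-a.e., hence the same constant bounds the upper semicontinuous envelope $|\bb_{\sppi}|^*$ \emph{everywhere}, because a constant function is an admissible competitor in the definition of the envelope. Plugging this into the right-hand side yields the single global scalar estimate \eqref{eqn:almost1}, namely $\int_{AC}(f(\gamma_0)-f(\gamma_1))\,\d\ppi\le C(\ppi)\,|Df|(X)\,\|{\rm Lip}(\gamma)\|_{L^\infty(\sppi)}$ for every $\infty$-plan $\ppi$, and the conclusion follows from Remark 7.2 of \cite{ADM}, which characterizes $w$-$BV$ and bounds $|Df|_w(X)$ precisely in terms of this family of global inequalities. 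No localization over curve subsets and no pointwise weak upper gradient inequality is needed; note that the lemma only claims an inequality of total masses, not of measures. If you replace your final paragraph by this two-line bound plus the citation, your argument becomes the paper's proof.
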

\begin{proof}
As for the second inclusion it is sufficient to recall Proposition \ref{prop:pipi}: we know that for every $\infty$-plan $\ppi$ we can associate a derivation $\bb_{\sppi} \in \Deri$; we use this derivation in the definition of $BV$ and, using also Theorem \ref{thm:improved1}, we obtain
$$- \int_X f \cdot \div \bb_{\sppi} \, \d \mm  \leq \int_X |\bb_{\sppi}|^* \, \d |Df|.$$
Now, using \eqref{def:modul} and \eqref{def:bspi2}, we obtain 
\begin{equation}
\label{eqn:almost1}
\int_{AC} (f(\gamma_0) - f(\gamma_1)) \d \ppi \leq C(\ppi) \cdot |Df|(X) \| {\rm Lip} (\gamma) \|_{L^{\infty}(\sppi)} \qquad \forall \ppi \text{ $\infty$-plan. }
\end{equation}
Now we can use Remark 7.2 in \cite{ADM} to conclude that $f \in w-BV$ and $|Df|_w(X) \leq |Df|(X)$
\end{proof}

Using this two lemmas in conjunction with the equivalence result in \cite{ADM} we can conlcude.

\begin{theorem}\label{th:finalbv} Let $(X,\sfd,\mm)$ be a complete and separable metric space, such that $\mm$ is finite on bounded sets; then $BV(X, \sfd,\mm)=BV_*(X,\sfd,\mm)=w-BV(X,\sfd,\mm)$. Moreover $|Df|=|Df|_*=|Df|_w$ for every function $f \in BV$.
\end{theorem}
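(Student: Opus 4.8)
The plan is to assemble the two inclusion lemmas just proved together with the equivalence $BV_* = w-BV$ from \cite{ADM}, and then squeeze all three notions into one. First I would record the chain of set inclusions: Lemma~\ref{lem:1eq} gives $BV_* \subseteq BV$, and Lemma~\ref{lem:2eq} gives $BV \subseteq w-BV$; since \cite{ADM} asserts $w-BV = BV_*$, these close up into
$$ BV_* \subseteq BV \subseteq w-BV = BV_*, $$
so that $BV = BV_* = w-BV$ as sets. This already settles the first assertion of the theorem, with no further work.

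For the identification of the variation measures I would fix $f$ in the common space and play off the two lemmas against each other, exploiting that they have different strengths. Lemma~\ref{lem:1eq} delivers the genuinely pointwise (measure) inequality $|Df| \leq |Df|_*$, whereas Lemma~\ref{lem:2eq} only provides the inequality of total masses $|Df|_w(X) \leq |Df|(X)$. The point is that \cite{ADM} furnishes not merely the coincidence of the two spaces but the equality $|Df|_* = |Df|_w$ \emph{as measures}, and in particular $|Df|_*(X) = |Df|_w(X)$. Chaining these on the total masses gives
$$ |Df|_*(X) = |Df|_w(X) \leq |Df|(X) \leq |Df|_*(X), $$
where the last inequality is the evaluation at $X$ of $|Df| \leq |Df|_*$, and I would conclude $|Df|(X) = |Df|_*(X)$.

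The final step promotes this equality of total masses to an equality of measures. Since $|Df|$ and $|Df|_*$ are finite nonnegative Borel measures with $|Df| \leq |Df|_*$ and the same finite total mass, the set function $A \mapsto |Df|_*(A) - |Df|(A)$ is a nonnegative measure of total mass zero, hence identically zero; therefore $|Df| = |Df|_*$ as measures. Combining with $|Df|_* = |Df|_w$ from \cite{ADM} yields $|Df| = |Df|_* = |Df|_w$ for every $f$, which finishes the proof. I do not expect a genuine obstacle here: all the analytic substance resides in Lemma~\ref{lem:1eq}, Lemma~\ref{lem:2eq}, and the cited equivalence. The only real idea is the measure-domination-plus-equal-mass trick, which is exactly what lets the one-sided mass bound of Lemma~\ref{lem:2eq} be upgraded to a two-sided identity of measures; the one point to verify carefully is that \cite{ADM} indeed gives $|Df|_* = |Df|_w$ at the level of measures rather than merely of total masses.
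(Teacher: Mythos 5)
Your proposal is correct and follows essentially the same route as the paper: chain the two inclusion lemmas with the equivalence $BV_*=w\text{-}BV$ and $|Df|_*=|Df|_w$ from \cite{ADM} to get equality of the spaces and of the total masses, then upgrade $|Df|\leq|Df|_*$ plus equal total mass to equality of measures. The ``nonnegative measure of total mass zero is zero'' step you spell out is exactly the final step the paper uses implicitly.
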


\begin{proof} From Lemma \ref{lem:1eq} and \ref{lem:2eq} we know that $BV_* \subseteq BV \subseteq w-BV$ and moreover $|Df| \leq | Df |_* $ and $|Df|(X) \geq |Df|_w(X)$. Thanks to the equivalence theorem in \cite{ADM} we get $BV=BV_*=w-BV$ and $|Df|_w=|Df|_*$, in particular $|Df|_w(X)=|Df|_*(X) \geq |Df|(X)$, and so $|Df|(X)=|Df|_*(X)=|Df|_w(X)$. This equality, along with $|Df| \leq | Df |_* $ let us conclude that the three definitions of total variation coincide.
\end{proof}

\def\cprime{$'$} \def\cprime{$'$}

\end{document}